\DeclareMathOperator{\Tr}{Tr}
\newtheoremstyle{mytheoremstyle} % name
    {\topsep}                    % Space above
    {\topsep}                    % Space below
    {}                   % Body font
    {}                           % Indent amount
    {\itshape} %{\scshape}                   % {\itshape{Theorem}}   head font
    {:}                          % Punctuation after theorem head
    {.5em}                       % Space after theorem head
    {}  % {\itshape{Theorem}}   head spec (can be left empty, meaning ‘normal’)
\theoremstyle{mytheoremstyle}
\newtheorem{thm} {Theorem}
\newtheorem{lem}[thm]{Lemma}
\newtheorem{defn}[thm]{Definition}
\newtheorem{rem}[thm]{Remark}
\begin{document}

\title{Analysis of Recurrent Linear Networks for Enabling Compressed Sensing of Time-Varying Signals}

%\author{MohammadMehdi Kafashan,~\IEEEmembership{Student Member,~IEEE,}
%        Anirban Nandi,~\IEEEmembership{Student Member,~IEEE,}
%        and~ShiNung Ching,~\IEEEmembership{Member,~IEEE}% <-this % stops a space
\author{MohammadMehdi Kafashan,	Anirban Nandi, and~ShiNung Ching% <-this % stops a space
\thanks{M. Kafashan and A. Nandi are with the Department of Electrical and Systems Engineering, Washington University in St. Louis, One Brookings Drive, Campus Box 1042, MO 63130, USA e-mail: kafashan@wustl.edu, nandia@ese.wustl.edu.}% <-this % stops a space
\thanks{S. Ching is with Faculty of Electrical Engineering \& the Division of Biology and Biomedical Sciences, Washington
University in St. Louis, One Brookings Drive, Campus Box 1042, MO 63130, USA e-mail: shinung@ese.wustl.edu.}% <-this % stops a space
%\thanks{Manuscript received April 19, 2005; revised September 17, 2014.}
}

%\markboth{IEEE Transactions on Signal and Information Processing over Networks}%
%{Kafashan \MakeLowercase{\textit{et al.}}: Bare Demo of IEEEtran.cls for Journals}

% use for special paper notices
%\IEEEspecialpapernotice{(Invited Paper)}

\maketitle

% As a general rule, do not put math, special symbols or citations
% in the abstract or keywords.
\begin{abstract}
Recent interest has developed around the problem of dynamic compressed sensing, or the recovery of time-varying, sparse signals from limited observations.  In this paper, we study how the dynamics of recurrent networks, formulated as general dynamical systems, mediate the recoverability of such signals.  We specifically consider the problem of recovering a high-dimensional network input, over time, from observation of only a subset of the network states (i.e., the network output).  Our goal is to ascertain how the network dynamics lead to performance advantages, particularly in scenarios where both the input and output are corrupted by disturbance and noise, respectively.  For this scenario, we develop bounds on the recovery performance in terms of the dynamics.  Conditions for exact recovery in the absence of noise are also formulated.  Through several examples, we use the results to highlight how different network characteristics may trade off toward enabling dynamic compressed sensing and how such tradeoffs may manifest naturally in certain classes of neuronal networks.

%Such a problem is related to the notion of dynamic compressed sensing, wherein, rather than a static sparse r

% Such a problem is related to the notion of dynamic compressed sensing, 

%In this paper, we study the sparseness recoverability of overactuated Recurrent Neuronal Networks, i.e., one in which the number of inputs greatly outnumbers the number of states. In general, the overactuated formulation leads to an underdetermined recovery problem, such that it is impossible to exactly infer the system inputs based on state observation alone. We show, however, that under assumptions of input sparsity, it is possible to perform exact and stable recovery over a finite time-horizon and develop an error bound for the reconstructed control input.  The solution methodology involves $\ell_1$-based regularization, commonplace is sparse recovery problems, but here extended to the case of a linear dynamical system evolving over a time horizon. Also, we extend the theoritical results for an over-actuated linear system which states of the system are not fully observable at each time. Furthermore, we show To what extent can such recovery be performed in a dynamic network setting in the presence of both disturbance and noise. In other words, we will show that there is a general trade off between rejecting disturbance and noise (e.g., in linear systems). Simulation results are presented to verify the solution and performance bounds.

\end{abstract}

% Note that keywords are not normally used for peerreview papers.
\begin{IEEEkeywords}
Recurrent networks, linear dynamic systems, over-actuated systems, sparse input, $l_1$ minimization
\end{IEEEkeywords}

\IEEEpeerreviewmaketitle

\section{Introduction}
%\IEEEPARstart{I}{n} this paper, motivated by biophysical neuronal networks, we consider an estimation and control problem in a two layered linear recurrent network with input and output nodes  \cite{dayan2005theoretical,hochreiter1997long,yegnanarayana2009artificial}. In a linear recurrent network, each node possesses linear dynamics and connections between nodes can form arbitrary feedback loops.

\IEEEPARstart{W}{e} consider the analysis of recurrent networks for facilitating recovery of a high-dimensional, time-varying, sparse input in the presence of both corrupting disturbance and confounding noise. The network receives an input $u_t$ and generates the observations (network outputs), $y_t$ via its recurrent dynamics, i.e., 
\[
x_{t+1} = f(x_t,u_t,d_t)
\]
\[
y_t = g(x_t,e_t)
\]
where, here, $x_t$ are the network states, $d_t$ is the corrupting disturbance and $e_t$ is the confounding noise.  Our focus is on how the network dynamics, embedded in $f(\cdot), g(\cdot)$, impact the extent to which $u_t$ can be inferred from $y_t$ in the case where the dimensionality of the latter is substantially less than that of the former.  We will focus exclusively on the case where these dynamics are linear.

Such a problem, naturally, falls into the category of sparse signal recovery or compressed sensing (CS), for under-determined linear systems  \cite{candes2008introduction,eldar2012compressed,haupt2008compressed}. It is well known that for such problems, exact and stable recovery can be achieved under certain assumptions related to the statistical properties of the observed signal  \cite{donoho2006most,candes2006robust,candes2005decoding,candes2006stable}.  Classical CS, however, does not typically consider temporal dynamics associated with the recovery problem.

{
\subsection{Motivation}

Given the natural sparsity of electrical signals in the brain, CS has been linked to important questions in neural decoding \cite{petrantonakis2014compressed,wei2015bayesian}, i.e., how the brain represents and transforms information.   Understanding the dynamics of brain networks in the context of CS is a crucial aspect of this overall problem \cite{olshausen1996emergence,petrantonakis2014compressed, Barranca2014, barranca2014sparsity}. Such networks are, of course, not static.  Thus, recent interest has grown around so-called \textit{dynamic CS} and, specifically, on the recovery of signals subjected to transformation via a dynamical system (or, network).  In this context, sparsity has been formulated in three ways:  1) In the network states (state sparsity) \cite{vaswani2008kalman,charles2011sparsity,wakin2010observability}; 2) In the structure/parameters of the network model (model sparsity) \cite{sanandaji2011compressive,napoletani2008reconstructing}; and 3) In the inputs to the network (input sparsity) \cite{shoukry2014secure, fawzi2014secure,ba2012exact,sefati2015linear,jaeger2001short, white2004short, ganguli2008memory, hermans2010memory, wallace2013randomly, ganguli2010short,charles2014short}.  Here, we consider this latter category of recovery problems.

Our motivation is to understand how three stages of a generic network architecture -- an afferent stage, a recurrent stage, and an output stage (see Fig. \ref{fig:schematic1}) -- interplay in order to enable an observer, sampling the output, to recover the (sparse) input.  Such an architecture is pervasive in sensory networks in the brain wherein a large number of sensory neurons, receiving excitation from the periphery, impinge on an early recurrent network layer that transforms the afferent excitation en route to higher brain regions \cite{Ito2008a,Raman2010a}.    Moreover, beyond neural contexts, understanding network characteristics for dynamic CS may aid in the analysis of systems for efficient processing of naturally sparse time-varying signals \cite{ba2012exact}; and in the design of resilient cyber-physical systems \cite{shoukry2014secure, fawzi2014secure,sefati2015linear}, wherein extrinsic perturbations are sparse and time-varying.  Toward these potential instantiations, our specific aim in this paper is to elucidate fundamental dynamical {characteristics} of linear networks for exact and stable recovery of the (sparse) input signal, corrupted by an input disturbance.

%As yet, no general set of results for sparse input recovery in this setting is available.

\begin{figure}
	\centering
		\includegraphics[width=0.55\textwidth]{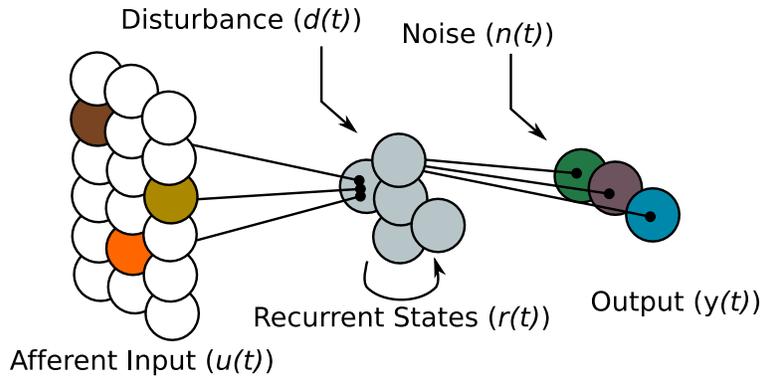}
	\caption{Schematic of the considered network architecture.  We study how the afferent, recurrent and output stages of this architecture interplay in order to enable accurate estimation of the input $u(t)$ from $y(t)$ in the presence of both disturbance and noise.}
	\label{fig:schematic1}
\end{figure}

{
\subsection{Paper Contributions}
} To achieve our specific aim, we develop and present the following contributions:
\begin{enumerate}
\item  We develop analytical conditions on the network dynamics, related to the classical notion of observability for a linear system, such that the network admits exact and stable (in the presence of output noise) input recovery.
\item  We derive an upper bound in terms of the network dynamics, for the $l_2$-norm of the reconstruction error over a finite time horizon.  This error can be defined in terms of both the disturbance and the noise.
\item  Based on the error analysis, we characterize a basic tradeoff between the ability of a network to simultaneously reject input disturbances while still admitting stable recovery.
\item We highlight network {characteristics} that optimally balance this tradeoff, and demonstrate via several examples their ability to reconstruct corrupted time-varying input from noisy observations.  In particular, we highlight an example of a rate-based neuronal network, and how specific features of the network architecture mediate these tradeoffs.
\end{enumerate}
}

{
\subsection{Prior Results in Sparse Input Recovery}
The sparse input recovery problem for linear systems can be formulated in both the spatial and temporal dimension.  Our contributions are related to the former.  As mentioned above, in this context, previous work has considered recovery of spatially sparse inputs for network resilience \cite{shoukry2014secure, fawzi2014secure} and encoding of inputs with sparse increments \cite{ba2012exact}.  In \cite{sefati2015linear}, conditions for exact sparse recovery are formulated in terms of a coherence-based observability criterion for intended applications in cyber-physical systems.  Our contributions herein provide a general set of analytical results, including performance bounds, pertaining to exact and stable sparse input recovery of linear systems in the presence of both noise and disturbance.

A second significant line of research in sparse input recovery problems pertains to the temporal dimension.  There, the goal is to understand how a temporally sparse signal (i.e., one that takes the value of zero over a nontrivial portion of its history) can be recovered from the state of the network at a particular instant in time.  This problem forms the underpinning of a characterization of `memory' in dynamical models of brain networks \cite{jaeger2001short, white2004short, ganguli2008memory, hermans2010memory, wallace2013randomly, ganguli2010short}.  In particular, in \cite{charles2014short} the problem of ascertaining memory is related to CS performed on the network states, over a receding horizon of a scalar-valued input signal.  In contrast to these works, we consider spatial sparsity of vector-valued inputs with explicit regard for both disturbance and an overt observation equation, i.e., states are not be directly sampled, but are transformed to an (in general lower-dimensional) output.

\subsection{Paper Outline}}
The remainder of the paper is organized as follows. In Section II we provide motivation of the current work and formulate the problem in detail. In Section III we develop theoretical results on the performance of the proposed recovery method. Simulation results for several different scenarios are provided in Section IV. Finally, conclusions are formulated in Section V.

\section{Problem Formulation}

We consider a discrete-time model for a linear network, formulated in the typical form a linear dynamical system, i.e.,:
\begin{equation}
\begin{aligned}
{\bf{r}}_{k+1} &= {\bf{A}} {\bf{r}}_k + {\bf{B}} {\bf{u}}_k + {\bf{d}}_k\\
{\bf{y}}_k &= {\bf{C}} {\bf{r}}_k + {\bf{e}}_k,
\end{aligned}
\label{linearDynamicFormulation}
\end{equation}
where $k$ is an integer time index, ${\bf{r}}_k\in\mathbb{R}{^{n}}$ is the activity of the network nodes (e.g., in the case of a rate-based neuronal network \cite{ostojic2009connectivity, kafashan2014bounded,abbott1999lapicque, dayan2005theoretical,izhikevich2003simple, kafashan2014node,kafashan2015optimal}, the firing rate of each neuron), ${\bf{u}}_k\in \mathbb{R}^m$ is the extrinsic input, ${\bf{d}}_k\in\mathbb{R}{^{n}}$ is the input disturbance, ${\bf{e}}_k\in\mathbb{R}{^{p}}$ is the measurement noise independent from ${\bf{d}}_k$, and ${\bf{y}}_k\in\mathbb{R}{^{p}}$ is the observation at time $k$. The matrix ${\bf{A}}\in\mathbb{R}{^{n\times n}}$ describes connections between nodes in the network, ${\bf{B}}\in\mathbb{R}{^{n\times m}}$ contains weights between input and output and ${\bf{C}}\in\mathbb{R}{^{p\times n}}$ is the measurement matrix. Such a model is, of course, quite general and can be used to describe recurrent dynamics in neuronal networks \cite{douglas2007recurrent,kohonen1976fast, seung2000stability,guler2005recurrent, omlin1996extraction}, machine learning applications such as pattern recognition and data mining \cite{ghanem2010sparse,wei2014adaptive,li2011fast,tao2004prediction}, etc.
% Fig. \ref{model} shows the schematic diagram of a linear recurrent neuronal network described by equation \eqref{linearDynamicFormulation}.

%\begin{figure}[!t]
%\centering
%\includegraphics[width=0.4\textwidth]{model.pdf}
%\caption{Schematic diagram of a linear recurrent network with input firing rates $\bf{u}$, network firing rates $\bf{r}$, measurement signal $\bf{y}$ in the presence of disturbance $\bf{d}$ and noise $\bf{e}$.}
%\label{model}
%\end{figure}

We consider the case of bounded disturbance and noise, i.e.,  $\|{\bf{e}}_k\|_{\ell_2} \leq \epsilon$, $\|{\bf{d}}_k\|_{\ell_2} \leq \epsilon^{\prime}$. Since $m$, the number of input nodes, is larger than $n$, the number of output nodes, $\bf{B}$ takes the form of a  ``wide" matrix. We assume that at each time at most $s$ input nodes are active ($s$-sparse input), leading to an $\ell_0$ constraint to (\ref{linearDynamicFormulation}) at each time point:
\begin{equation}
\|{\bf{u}}_k\|_{\ell_0} \leq s.
\label{l0constraint}
\end{equation}
In the absence of disturbance and noise, recovering the input of \eqref{linearDynamicFormulation} with the $\ell_0$ constraint \eqref{l0constraint} amounts to the optimization problem:
%One can think of obtaining solutions to the input and the state of the linear system (\ref{linearDynamicFormulation}) with $\ell_0$ constraint (\ref{l0constraint}) in the absence of disturbance and noise by solving the following optimization problem ($P0$):
\begin{equation}\label{P0}
\begin{aligned}
 & (P0)& \underset{\left({\bf{r}}_k\right)_{k=0}^K,\left({\bf{u}}_k\right)_{k=0}^{K-1}}{\text{minimize}}
 & & & \sum\limits_{k=0}^{K-1}\| {\bf{u}}_k \|_{\ell_0}\\
 & &\text{subject to}
 & & & {{\bf{r}}_{k + 1}} = {{\bf{A}}}{{\bf{r}}_k} + {{\bf{B}}}{{\bf{u}}_k}\\
 & &
 & & & {{\bf{y}}_k} = {\bf{C}}{{\bf{r}}_k}.
 \end{aligned}
\end{equation}
It is clear that Problem ($P0$) is a non-convex discontinuous problem, which is not numerically feasible and is NP-Hard in general \cite{muthukrishnan2005data}. For static cases, such $\ell_0$ optimization problems fall into the category of combinatorial optimization which require exhaustive search to find the solution \cite{parker1988}. 

%To avoid complexity of discrete optimization and finding the solution for the ill posed problem, the $\ell_0$ norm is relaxed to the $l_1$ norm to result in a convex optimization problem.
Thus, throughout this paper, we follow the typical relaxation methodology used for such problems wherein the $\ell_0$ norm is relaxed to the $l_1$ norm, resulting in the problem:
%\subsection{$l_1$-norm optimization}
%Throughout this paper, we consider the following convex optimization problem for the noiseless case ($P1$):
\begin{equation}\label{P1}
\begin{aligned}
 & (P1)& \underset{\left({\bf{r}}_k\right)_{k=0}^K,\left({\bf{u}}_k\right)_{k=0}^{K-1}}{\text{minimize}}
 & & & \sum\limits_{k=0}^{K-1}\| {\bf{u}}_k \|_{\ell_1}\\
 & &\text{subject to}
 & & & {{\bf{r}}_{k + 1}} = {{\bf{A}}}{{\bf{r}}_k} + {{\bf{B}}}{{\bf{u}}_k}\\
 & &
 & & & {{\bf{y}}_k} = {\bf{C}}{{\bf{r}}_k}.
 \end{aligned}
\end{equation}
In the case that either input disturbance, or measurement noise, or both exist, we solve the following convex optimization Problem ($P2$):
\begin{equation}\label{P2}
\begin{aligned}
 & (P2)& \underset{\left({\bf{r}}_k\right)_{k=0}^K,\left({\bf{u}}_k\right)_{k=0}^{K-1}}{\text{minimize}}
 & & & \sum\limits_{k=0}^{K-1}\| {\bf{u}}_k \|_{\ell_1}\\
 & &\text{subject to}
 & & & {{\bf{r}}_{k + 1}} = {{\bf{A}}}{{\bf{r}}_k} + {{\bf{B}}}{{\bf{u}}_k}\\
 & &
 & & & \|{{\bf{y}}_k} - {\bf{C}}{{\bf{r}}_k}\|_{\ell_2} \leq \epsilon^{\prime\prime},\\
 \end{aligned}
\end{equation}
where $\epsilon^{\prime\prime}$ is 2-norm of a surrogate parameter that aggregates the effects of disturbance and noise.  In the case of noisy measurement with no disturbance $\epsilon^{\prime\prime}=\epsilon$.  
%, while it is related to both $\epsilon$ and $\epsilon^{\prime}$ in the presence of disturbance and noise. 
In the next section, we show conditions for the network \eqref{linearDynamicFormulation} under which Problems ($P1$) and ($P2$) result in exact and stable solutions. %  with $\ell_0$ constraint (\ref{l0constraint}) in noiseless and noisy measurements, respectively.

%%%%%%%%% (P3)%%%%%%%%%%%%
%We can penalized the cost function in ($P2$) with the cone constraint to get regularized problem ($P3$) known as LASSO estimator  \cite{tibshirani1996regression} as
%\begin{equation}\label{P3}
%\begin{aligned}
% & (P3)& \underset{{\bf{r}}_k,{\bf{u}}_k}{\text{minimize}}
% & & & \sum\limits_{k=0}^{K} \|{{\bf{y}}_k} - {\bf{C}}{{\bf{r}}_k}\|_{\ell_2} + \mu \sum\limits_{k=0}^{K-1}\| {\bf{u}}_k \|_{\ell_1}\\
% & &\text{subject to}
% & & & {{\bf{r}}_{k + 1}} = {{\bf{A}}}{{\bf{r}}_k} + {{\bf{B}}}{{\bf{u}}_k},~k=0,\cdots ,K-1,
%\end{aligned}
%\end{equation}
%where $\mu$ is the regularized parameter which is essentially determines how sparse  the solution of control input ${\bf{u}}_k$ would be. 

\section{Results}
We will develop our results in several steps.  First, we consider two cases for the observation matrix ${\bf{C}}$ in the absence of input disturbance, and proceed to establish existence and performance guarantees for solutions to the convex problems ($P1$) and ($P2$) for each case. After that, we continue the analysis to characterize the ability of a network to reject input disturbances while still admitting stable recovery in the presence of disturbance and noise, simultaneously. 

\subsection{Preliminaries}
We begin by recalling some basic matrix notation and matrix norm properties that will be used throughout this paper. Given normed spaces $(\mathbb{R}{^{n_1}}, \|.\|_{\ell_2})$ and $(\mathbb{R}{^{n_2}}, \|.\|_{\ell_2})$, the corresponding induced norm or operator norm denoted by $\| .\|_{i,2}$ over linear maps ${\bf{D}}: \mathbb{R}{^{n_1}} \to \mathbb{R}{^{n_2}}$, ${\bf{D}}\in\mathbb{R}{^{n_2\times n_1}}$ is defined by
\begin{equation}
\begin{aligned}
\| {\bf{D}}\|_{i, 2} &= sup\{ \frac{\| {\bf{D}} {\bf{r}} \|_{\ell_2}} {\|{\bf{r}}\|_{\ell_2}} ~|~ {\bf{r}} \in \mathbb{R}{^{n_1}} \}\\
 & = max\{ \sqrt{\lambda} ~|~ \lambda \in \sigma\left({\bf{D}}^T{\bf{D}}\right)  \},
\end{aligned}
\label{induced}
\end{equation}
where $\sigma({\bf{M}})$ is the set of eigenvalues of ${\bf{M}}$ (or the spectrum of ${\bf{M}}$).

\begin{defn}
A vector is said to be $s$-sparse if $\|{\bf{c}}\|_{\ell_0} \leq s$, in other words it has at most $s$ nonzero entries.
\end{defn}
It is well known that in the static case (standard CS), exact and stable recovery of sparse inputs can be obtained under the restricted isometry property (RIP)  \cite{donoho2006most,candes2006robust,candes2005decoding,candes2006stable,candes2008restricted}, defined as:
\begin{defn}{ The restricted isometry constant $\delta_s$ of a matrix \mbox{$\Phi \in\mathbb{R}{^{n\times m}}$} is defined as the smallest number such that for all $s$-sparse vectors ${\bf{c}} \in\mathbb{R}{^{m}}$ the following equation holds
\begin{equation}
(1-\delta_{s})\|{\bf{c}}\|_{\ell_2}^{2}\leq\|\Phi {\bf{c}}\|_{\ell_2}^{2}\leq(1+\delta_{s})\|{\bf{c}}\|_{\ell_2}^{2}.
\label{RIP}
\end{equation}
 }
\end{defn}
It is known that many types of random matrices with independent and identically distributed entries or sub-Gaussian matrices satisfy the RIP condition (\ref{RIP}) with overwhelming probability  \cite{candes2006near,baraniuk2008simple,mendelson2008uniform}.

\subsection{{Case 1: Full-rank Square Observation Matrix $C$ without Input Disturbance}}

In the first case, we consider (\ref{linearDynamicFormulation}) in the absence of input disturbance (${\bf{d}}_k=0$) and we assume that the linear map \mbox{$C: \mathbb{R}{^{n}} \to \mathbb{R}{^{n}}$, $p=n$}, has no nullspace, $\mathcal{N}(C)=\{0\}$, which means that the network states can be exactly recovered by inverting the observation equation \eqref{linearDynamicFormulation} (the trivial case being C equal to the identity). Our first result establishes a one to one correspondence between sparse input and observed output for the system (\ref{linearDynamicFormulation}). % in the noiseless case.

\begin{lem}
Suppose that the sequence $\left({\bf{y}}_k\right)_{k=0}^K$ from noiseless measurements is given, and ${\bf{A}}$, ${\bf{B}}$, ${\bf{C}}$, $\mathcal{N}(C)=\{0\}$ are known. Assume the matrix ${\bf{B}}$ satisfies the RIP condition (\ref{RIP}) with isometry constant $\delta_{2s}<1$. Then, there is a unique $s$-sparse sequence of $\left({\bf{u}}_k\right)_{k=0}^{K-1}$ and a unique sequence of  $\left({\bf{r}}_k\right)_{k=0}^K$ that generate $\left({\bf{y}}_k\right)_{k=0}^K$. 
\label{UniquenessFullRankC} 
\end{lem}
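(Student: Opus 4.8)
The system is:
- $r_{k+1} = A r_k + B u_k$
- $y_k = C r_k$

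Given: $C$ is square ($p = n$) with $\mathcal{N}(C) = \{0\}$, so $C$ is invertible. $B$ satisfies RIP with $\delta_{2s} < 1$. Inputs are $s$-sparse.

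We need to show: given $(y_k)_{k=0}^K$ from noiseless measurements, there's a unique $s$-sparse sequence $(u_k)_{k=0}^{K-1}$ and unique sequence $(r_k)_{k=0}^K$ generating the $y_k$.

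**My proof approach:**

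Since $C$ is invertible, from $y_k = C r_k$ we immediately get $r_k = C^{-1} y_k$. So the states are uniquely determined! This gives uniqueness of $(r_k)_{k=0}^K$ immediately.

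Then from $r_{k+1} = A r_k + B u_k$, we get $B u_k = r_{k+1} - A r_k$, which is now a fixed (known) vector. So we need $B u_k = b_k$ where $b_k := r_{k+1} - A r_k = C^{-1} y_{k+1} - A C^{-1} y_k$.

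So the question reduces to: does the equation $B u_k = b_k$ have a unique $s$-sparse solution?

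This is the classical compressed sensing uniqueness result. The key fact: if $B$ satisfies RIP with $\delta_{2s} < 1$, then there's at most one $s$-sparse vector $u$ with $Bu = b$ for any given $b$.

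**Why $\delta_{2s} < 1$ gives uniqueness:** Suppose $u_1, u_2$ are both $s$-sparse with $B u_1 = B u_2 = b_k$. Then $u_1 - u_2$ is $2s$-sparse and $B(u_1 - u_2) = 0$. By RIP:
$$(1 - \delta_{2s}) \|u_1 - u_2\|_2^2 \leq \|B(u_1 - u_2)\|_2^2 = 0.$$
Since $\delta_{2s} < 1$, we have $1 - \delta_{2s} > 0$, so $\|u_1 - u_2\|_2^2 = 0$, hence $u_1 = u_2$.

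**Existence:** The problem states the sequence $(y_k)$ comes from actual noiseless measurements of the system, so at least one valid $s$-sparse input sequence exists by assumption (the true one that generated the data).

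---The plan is to exploit the invertibility of $C$ to decouple the state-recovery problem from the input-recovery problem, and then reduce the latter to the classical static compressed-sensing uniqueness argument. First I would observe that, since $\mathcal{N}(C)=\{0\}$ and $C$ is square, $C$ is invertible, so the observation equation ${\bf{y}}_k = {\bf{C}}{\bf{r}}_k$ yields ${\bf{r}}_k = {\bf{C}}^{-1}{\bf{y}}_k$ directly for every $k$. This immediately settles uniqueness of the state sequence $\left({\bf{r}}_k\right)_{k=0}^{K}$: it is completely pinned down by the measurements, with no reference to the input at all.

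With the states in hand, the next step is to substitute them into the dynamics to isolate the input. Writing the state recursion as ${\bf{B}}{\bf{u}}_k = {\bf{r}}_{k+1}-{\bf{A}}{\bf{r}}_k = {\bf{C}}^{-1}{\bf{y}}_{k+1}-{\bf{A}}{\bf{C}}^{-1}{\bf{y}}_k$, I would note that the right-hand side is a known, fixed vector for each $k$, call it ${\bf{b}}_k$. Thus recovering the input sequence decomposes into $K$ decoupled instances of the single static problem: find an $s$-sparse ${\bf{u}}_k$ with ${\bf{B}}{\bf{u}}_k = {\bf{b}}_k$. Because the true measured data was generated by an admissible $s$-sparse input, existence of a solution is guaranteed for each $k$, so the only remaining work is uniqueness.

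For uniqueness at each time index, I would run the standard RIP argument. Suppose ${\bf{u}}_k$ and ${\bf{u}}_k^{\prime}$ are both $s$-sparse and satisfy ${\bf{B}}{\bf{u}}_k = {\bf{B}}{\bf{u}}_k^{\prime} = {\bf{b}}_k$. Then their difference ${\bf{u}}_k - {\bf{u}}_k^{\prime}$ is $2s$-sparse and lies in the nullspace of ${\bf{B}}$. Applying the RIP bound \eqref{RIP} with constant $\delta_{2s}$ to this $2s$-sparse vector gives
\begin{equation}
(1-\delta_{2s})\|{\bf{u}}_k - {\bf{u}}_k^{\prime}\|_{\ell_2}^2 \leq \|{\bf{B}}({\bf{u}}_k - {\bf{u}}_k^{\prime})\|_{\ell_2}^2 = 0,
\end{equation}
and since $\delta_{2s}<1$ forces $1-\delta_{2s}>0$, we conclude $\|{\bf{u}}_k - {\bf{u}}_k^{\prime}\|_{\ell_2}=0$, i.e.\ ${\bf{u}}_k={\bf{u}}_k^{\prime}$. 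Running this for every $k$ establishes uniqueness of the full input sequence and completes the argument.

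I do not expect a serious obstacle here: the invertibility of $C$ trivializes the temporal/dynamical aspect of the problem, collapsing it onto a family of independent static recovery problems, so the lemma is essentially a dynamical restatement of the well-known fact that $\delta_{2s}<1$ guarantees at most one $s$-sparse preimage. The only point meriting care is being explicit that the $2s$-sparsity of the difference is exactly what the constant $\delta_{2s}$ (rather than $\delta_{s}$) is there to control, and that existence is inherited from the assumption that the $y_k$ arise from a genuine noiseless run of the system rather than being arbitrary.
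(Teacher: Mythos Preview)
Your proof is correct and follows essentially the same approach as the paper's: both use the invertibility of ${\bf C}$ to pin down the state sequence $\left({\bf r}_k\right)$ immediately, then reduce the input-recovery step to ${\bf B}({\bf u}_k-{\bf u}_k')=0$ for a $2s$-sparse difference and invoke the RIP lower bound with $\delta_{2s}<1$ to force ${\bf u}_k={\bf u}_k'$. The only cosmetic difference is that the paper phrases the input-uniqueness step as a contradiction on the full iterated output equations, whereas you isolate ${\bf b}_k={\bf r}_{k+1}-{\bf A}{\bf r}_k$ and treat each $k$ as a decoupled static CS problem; the logical content is identical.
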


\begin{proof}
See Appendix \ref{AppA}.
\end{proof}
Having established the existence of a unique solution, we now proceed to study convex Problems ($P1$) and ($P2$) that recover these solutions. First, we provide theoretical results for the stable recovery of the input where measurements are noisy i.e., ($P2$).

\begin{thm}{\textit{(Noisy recovery)}}      \label{TheoremNoisyFullRank}
Assume that the matrix \mbox{${\bf{B}}$} satisfies the RIP condition (\ref{RIP}) with $\delta_{2s} < \sqrt{2}-1$. Suppose that the sequence $\left({\bf{y}}_k\right)_{k=0}^K$ is given and generated from sequences $\left({\bf{\bar{r}}}_k\right)_{k=0}^K$ and $s$-sparse $\left({\bf{\bar{u}}}_k\right)_{k=0}^{K-1}$ based on 
\begin{equation}
\begin{aligned}
{\bf{\bar{r}}}_{k+1} &= {\bf{A}} {\bf{\bar{r}}}_k + {\bf{B}} {\bf{\bar{u}}}_k,~k=0,\cdots,K-1\\
{\bf{y}}_k &= {\bf{C}} {\bf{\bar{r}}}_k + {\bf{e}}_k,~k=0,...,K,
\end{aligned}
\label{linearDynamicFormulationwithnoise}
\end{equation}
where $\left(\| {\bf{e}}_k \|_{\ell_2} \leq \epsilon\right)_{k=0}^K$  and ${\bf{A}}$, ${\bf{B}}$, ${\bf{C}}$, $\mathcal{N}({\bf{C}})=\{0\}$ are known. Then, the solution to Problem ($P2$) obeys
\begin{equation}
\sum_{k=0}^{K-1} \| {\bf{u}}^*_k - {\bf{\bar{u}}}_k \|_{\ell_2} \leq C_s \epsilon,
\label{ErrorBound}
\end{equation}
where
\begin{equation}
C_s = 2\alpha C_0  K(1-\rho)^{-1}.
\end{equation}
$C_0,~ \rho,~ \alpha$ are given explicitly below:
\begin{equation}
\begin{aligned}
&C_0 = \frac{1}{\sqrt{\sigma}} \left( 1 + \sqrt{\frac{\sigma_{max}\left({\bf{{C}}}^{T}{\bf{{C}}}\right) \sigma_{max}\left({\bf{{A}}}^{T}{\bf{{A}}}\right)}{ \sigma_{min}\left({\bf{{C}}}^{T}{\bf{{C}}}\right)}} \right),\\
&\alpha = \frac{2 \sqrt{1+ \delta_{2s}}}{1-\delta_{2s}},\\
&\rho = \frac{\sqrt{2} \delta_{2s}}{1 - \delta_{2s}},\\
&{\sigma_{min}\left({\bf{{C}}}^{T}{\bf{{C}}}\right)} < \sigma < { \sigma_{max}\left({\bf{{C}}}^{T}{\bf{{C}}}\right)}.
\end{aligned}
\end{equation} 
\end{thm}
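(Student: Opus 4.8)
The plan is to collapse the coupled dynamic problem ($P2$) into a sequence of ordinary noisy sparse-recovery problems, one per time index, and then to aggregate the per-step error bounds. The mechanism that makes this possible is the invertibility of ${\bf{C}}$: because $\mathcal{N}({\bf{C}})=\{0\}$ and $p=n$, the measurement constraint $\|{\bf{y}}_k-{\bf{C}}{\bf{r}}_k\|_{\ell_2}\le\epsilon$ pins down each state up to $O(\epsilon)$, so the state recursion can be turned into a ``tube'' constraint on ${\bf{B}}{\bf{u}}_k$ of exactly the form required by the restricted isometry machinery of Cand\`es.

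First I would record what optimality of ($P2$) buys. The true trajectory $\left({\bf{\bar{r}}}_k,{\bf{\bar{u}}}_k\right)$ is feasible, since $\|{\bf{y}}_k-{\bf{C}}{\bf{\bar{r}}}_k\|_{\ell_2}=\|{\bf{e}}_k\|_{\ell_2}\le\epsilon=\epsilon''$, so the minimizer $\left({\bf{r}}^*_k,{\bf{u}}^*_k\right)$ obeys $\sum_k\|{\bf{u}}^*_k\|_{\ell_1}\le\sum_k\|{\bf{\bar{u}}}_k\|_{\ell_1}$. Writing ${\bf{h}}_k:={\bf{u}}^*_k-{\bf{\bar{u}}}_k$ and letting $T^{(k)}$ be the support of ${\bf{\bar{u}}}_k$ (with $|T^{(k)}|\le s$), the standard split of $\|{\bf{u}}^*_k\|_{\ell_1}$ over $T^{(k)}$ and its complement, summed over $k$, yields only the \emph{aggregate} cone condition $\sum_k\|({\bf{h}}_k)_{(T^{(k)})^c}\|_{\ell_1}\le\sum_k\|({\bf{h}}_k)_{T^{(k)}}\|_{\ell_1}$. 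This is the crux of the argument and the main obstacle: the $\ell_1$ penalty couples the time steps, so no per-step cone inequality is available, and a naive stacking of all inputs into one vector would force a restricted isometry condition at sparsity level $2sK$ rather than the assumed $\delta_{2s}<\sqrt{2}-1$.

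Next I would construct the per-step tube. Both ${\bf{C}}{\bf{r}}^*_k$ and ${\bf{C}}{\bf{\bar{r}}}_k={\bf{y}}_k-{\bf{e}}_k$ lie within $\epsilon$ of ${\bf{y}}_k$, so $\|{\bf{C}}({\bf{r}}^*_k-{\bf{\bar{r}}}_k)\|_{\ell_2}\le2\epsilon$; inverting ${\bf{C}}$ through $\|{\bf{C}}^{-1}\|_{i,2}=\sigma_{min}({\bf{C}}^T{\bf{C}})^{-1/2}$ (or, equivalently, a Rayleigh-quotient value $\sigma$ with $\sigma_{min}({\bf{C}}^T{\bf{C}})<\sigma<\sigma_{max}({\bf{C}}^T{\bf{C}})$) bounds the state error by $O(\epsilon)$. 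Subtracting the two state recursions gives ${\bf{B}}{\bf{h}}_k=({\bf{r}}^*_{k+1}-{\bf{\bar{r}}}_{k+1})-{\bf{A}}({\bf{r}}^*_k-{\bf{\bar{r}}}_k)$, whence $\|{\bf{B}}{\bf{h}}_k\|_{\ell_2}\le 2C_0\epsilon$ after bounding the ${\bf{A}}$ term with $\|{\bf{A}}\|_{i,2}=\sigma_{max}({\bf{A}}^T{\bf{A}})^{1/2}$; carrying the two induced-norm constants through is routine bookkeeping and reproduces the displayed $C_0$.

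Finally I would run the Cand\`es estimate per step and aggregate. Splitting $(T^{(k)})^c$ into the block $T_1^{(k)}$ of the $s$ largest entries of ${\bf{h}}_k$ and a tail, the inequality $\|({\bf{h}}_k)_{T^{(k)}\cup T_1^{(k)}}\|_{\ell_2}\le\alpha C_0\epsilon+\rho\,s^{-1/2}\|({\bf{h}}_k)_{(T^{(k)})^c}\|_{\ell_1}$ follows from the tube and the RIP at $\delta_{2s}<\sqrt{2}-1$ \emph{alone}, so it holds for every $k$. The decisive step is the order of operations: I would sum this over $k$ first, and only then invoke the aggregate cone condition together with $\|({\bf{h}}_k)_{T^{(k)}}\|_{\ell_1}\le\sqrt{s}\,\|({\bf{h}}_k)_{T^{(k)}\cup T_1^{(k)}}\|_{\ell_2}$ to absorb the $\ell_1$ tail. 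This gives $\sum_k\|({\bf{h}}_k)_{T^{(k)}\cup T_1^{(k)}}\|_{\ell_2}\le K\alpha C_0(1-\rho)^{-1}\epsilon$; bounding the remaining tail of each ${\bf{h}}_k$ by the same aggregate cone condition contributes the outer factor $2$, so $\sum_k\|{\bf{u}}^*_k-{\bf{\bar{u}}}_k\|_{\ell_2}\le 2\alpha C_0K(1-\rho)^{-1}\epsilon=C_s\epsilon$, as claimed. Thus the $K$ is an artifact of summing per-step tubes, the $2$ and the constants $\alpha,\rho$ are the usual RIP constants, and $C_0$ is the price of inverting the observation map.
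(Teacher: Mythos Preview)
Your proposal is correct and follows essentially the same architecture as the paper's proof: a per-step state-error bound from invertibility of ${\bf C}$ (the paper's Lemma~\ref{lemma1}), a per-step tube $\|{\bf B}{\bf h}_k\|_{\ell_2}\le 2C_0\epsilon$, an \emph{aggregate} cone constraint from optimality of ($P2$) (Lemma~\ref{cone}), the per-step Cand\`es inequality from RIP alone (Lemma~\ref{lemma3}), and then the decisive sum-first-then-cone step you identify, which is exactly equation~(\ref{proof2}) in the paper.

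One minor discrepancy: your state-space subtraction ${\bf B}{\bf h}_k=({\bf r}^*_{k+1}-{\bf \bar r}_{k+1})-{\bf A}({\bf r}^*_k-{\bf \bar r}_k)$ followed by the state-error bound produces a tube constant of the form $\sigma^{-1/2}\bigl(1+\sqrt{\sigma_{\max}({\bf A}^T{\bf A})}\bigr)$, which is not the displayed $C_0$. The paper instead multiplies through by ${\bf C}$ first, bounding $\|{\bf C}{\bf B}{\bf h}_k\|_{\ell_2}$ via $\|{\bf e}_{k+1}+{\bf e}^*_{k+1}\|_{\ell_2}+\|{\bf C}{\bf A}\|_{i,2}\|{\bf \bar r}_k-{\bf r}^*_k\|_{\ell_2}$ and only then dividing by the Rayleigh-quotient value $\sqrt{\sigma}$; this is what injects the extra $\sigma_{\max}({\bf C}^T{\bf C})/\sigma_{\min}({\bf C}^T{\bf C})$ ratio into $C_0$. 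Both routes are valid and yield equivalent-order bounds, but only the paper's route literally reproduces the stated constant.
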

%\subsection{Proof of Theorem 2}
\begin{proof}{
Assume that the the sequences $\left({\bf{{r}}}^*_k\right)_{k=0}^K$ and sparse $\left({\bf{{u}}}^*_k\right)_{k=0}^{K-1}$ are the solutions of Problem ($P2$). First we derive the bound for the $\| {\bf{{r}}}^*_k  -  {\bf{{\bar{r}}}}_k \|_{\ell_2}$ in the following Lemma.

\begin{lem}
Suppose that the sequence $\left({\bf{y}}_k\right)_{k=0}^K$ is given and generated from sequences $\left({\bf{\bar{r}}}_k\right)_{k=0}^K$ and $s$-sparse $\left({\bf{\bar{u}}}_k\right)_{k=0}^{K-1}$ based on (\ref{linearDynamicFormulationwithnoise}), where $\left(\| {\bf{e}}_k \|_{\ell_2} \leq \epsilon\right)_{k=0}^K$  and ${\bf{A}}$, ${\bf{B}}$, ${\bf{C}}$, $\mathcal{N}({\bf{C}})=\{0\}$ are known. Then, any solution ${\bf{{r}}}^*_k$ to Problem ($P2$) obeys
\begin{equation}
\begin{aligned}
\| {\bf{{r}}}^*_k  -  {\bf{{\bar{r}}}}_k \|_{\ell_2} &\leq \frac{2 \epsilon}{ \sqrt{ \sigma_{min}\left({\bf{{C}}}^{T}{\bf{{C}}}\right)}}
\end{aligned}
\end{equation}
\label{lemma1}
\end{lem}

\begin{proof}
See Appendix \ref{AppB}
\end{proof}

From  Lemma \ref{lemma1}, non-singularity of ${\bf{C}}$ and the equation ${\bf{y}}_k = {\bf{CA}}{\bf{\bar{r}}}_{k-1} + {\bf{CB}}{\bf{\bar{u}}}_{k-1} + {\bf{e}}_k$ we can derive a bound for $\|{\bf{B}}\left( {\bf{u}}^*_k - {\bf{\bar{u}}}_k \right)\|_{\ell_2}$ as 
\begin{equation}
\begin{aligned}
&{\sqrt{\sigma}}\|{\bf{B}}\left( {\bf{u}}^*_k - {\bf{\bar{u}}}_k \right)\|_{\ell_2} =\|{\bf{CB}}\left( {\bf{u}}^*_k - {\bf{\bar{u}}}_k \right)\|_{\ell_2}\\
&\qquad\qquad  =\|({\bf{e}}^*_{k+1} + {\bf{e}}_{k+1}) + {\bf{CA}}({\bf{\bar{r}}}_k - {\bf{r}}^*_k) \|_{\ell_2}\\
&\qquad\qquad  \leq \| {\bf{e}}_{k+1} + {\bf{e}}^*_{k+1}  \|_{\ell_2} + \| {\bf{CA}}({\bf{\bar{r}}}_k - {\bf{r}}^*_k) \|_{\ell_2}\\
&\qquad\qquad \leq 2 \epsilon \left( 1 + \sqrt{\frac{\sigma_{max}\left({\bf{{C}}}^{T}{\bf{{C}}}\right) \sigma_{max}\left({\bf{{A}}}^{T}{\bf{{A}}}\right)}{ \sigma_{min}\left({\bf{{C}}}^{T}{\bf{{C}}}\right)}} \right),
\end{aligned}
\end{equation}
which results in
\begin{equation}
\|{\bf{B}}\left( {\bf{u}}^*_k - {\bf{\bar{u}}}_k \right)\|_{\ell_2} \leq 2   C_0 \epsilon.
\label{main}
\end{equation}

Now, denote \mbox{${\bf{{u}}}^*_k = {\bf{{\bar{u}}}}_k + {\bf{{h}}}_k$} where ${\bf{{h}}}_k$ can be decomposed into a sum of vectors ${\bf{{h}}}_{k,T_0(k)}, {\bf{{h}}}_{k,T_1(k)},{\bf{{h}}}_{k,T_2(k)},\cdots$ for each $k$, each of sparsity at most $s$. Here, $T_0(k)$ corresponds to the location of non-zero elements of ${\bf{{\bar{u}}}}_k$, $T_1(k)$ to the location of $s$ largest coefficients of ${\bf{{h}}}_{k,T_0^c(k)}$, $T_2(k)$ to the location of the next $s$ largest coefficients of ${\bf{{h}}}_{k,T_0^c(k)}$, and so on. Also, let $T_{0 1}(k) \equiv T_0(k) \cup T_1(k)$. Extending the technique in  \cite{candes2006stable,candes2008restricted}, it is possible to obtain a cone constraint for the input in the linear dynamical systems.
\begin{lem}{\textit{(Cone constraint)}}
The optimal solution for the input in Problem ($P2$) satisfies
\begin{equation}
\sum_{k=0}^{K-1} \|{\bf{{h}}}_{k,T_{01}^c(1)}\|_{\ell_2}   \leq  \sum_{k=0}^{K-1}\|{\bf{{{h}}}}_{k,T_0(1)}\|_{\ell_2}.
\label{coneeq}
\end{equation}
\label{cone}
\end{lem}
\begin{proof}
See Appendix \ref{AppCone}.
\end{proof}

We can further establish a bound for the right hand side of (\ref{coneeq}): % which is stated in  Lemma \ref{lemma3}.

\begin{lem}
The optimal solution for the input in Problem ($P2$) satisfies the following constraint
\begin{equation}
\sum_{k=0}^{K-1} \|{\bf{{h}}}_{k,T_{01}(1)}\|_{\ell_2}  \leq K(1-\rho)^{-1}\alpha C_0 \epsilon.
\end{equation}

\label{lemma3}
\end{lem}
\begin{proof}
See Appendix \ref{AppD}.
\end{proof}

Finally, based on  Lemma \ref{cone} and  Lemma \ref{lemma3}, it is easy to see that
\begin{equation}
\begin{aligned}
\sum_{k=0}^{K-1}\|{\bf{{h}}}_{k}\|_{\ell_2}   &   \leq  \sum_{k=0}^{K-1} \left(\|{\bf{{h}}}_{k,T_{01}(1)}\|_{\ell_2} +  \|{\bf{{h}}}_{k,T^c_{01}(1)}\|_{\ell_2} \right) \\
& \leq 2 \sum_{k=0}^{K-1} \|{\bf{{h}}}_{k,T_{01}(1)}\|_{\ell_2}\\
&\leq 2\alpha C_0 K(1-\rho)^{-1}\epsilon = C_s \epsilon.
\end{aligned}
\end{equation}}
\end{proof}

%\begin{rem}
%The constant $C_s$ is directly related to the time horizon $K$ and for small $K$ is rather small.
%\end{rem}

We now state a Theorem that characterizes the solution for the noiseless case (P1), which follows as a special case of (P2) as the noise variance approaches zero.

\begin{thm}{\textit{(Noiseless recovery)}}  \label{TheoremNoiselessFullRank}
Assume that the matrix ${\bf{B}}$ satisfies the RIP condition (\ref{RIP}) with $\delta_{2s} < \sqrt{2}-1$. Suppose that the sequence $\left({\bf{y}}_k\right)_{k=0}^K$ is given and generated from sequences $\left({\bf{\bar{r}}}_k\right)_{k=0}^K$ and $s$-sparse input $\left({\bf{\bar{u}}}_k\right)_{k=0}^{K-1}$ based on dynamical equation (\ref{linearDynamicFormulation}), and ${\bf{A}}$, ${\bf{B}}$, ${\bf{C}}$, $\mathcal{N}(C)=\{0\}$ are known. Then the sequences $\left({\bf{\bar{r}}}_k\right)_{k=0}^K$ and $\left({\bf{\bar{u}}}_k\right)_{k=0}^{K-1}$ are the unique minimizer to Problem ($P1$).
\end{thm}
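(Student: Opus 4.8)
The plan is to obtain Theorem \ref{TheoremNoiselessFullRank} as the $\epsilon \to 0$ degeneration of the noisy result, Theorem \ref{TheoremNoisyFullRank}, rather than redoing the RIP argument from scratch. The key observation is that Problem ($P1$) is literally the instance of Problem ($P2$) obtained by setting the constraint tolerance to zero: the relaxed constraint $\|\mathbf{y}_k - \mathbf{C}\mathbf{r}_k\|_{\ell_2} \leq 0$ collapses to the exact equality $\mathbf{y}_k = \mathbf{C}\mathbf{r}_k$. In the noiseless setting $\mathbf{e}_k = 0$, so the data $(\mathbf{y}_k)_{k=0}^K$ are generated exactly by $(\bar{\mathbf{r}}_k, \bar{\mathbf{u}}_k)$ and the noise bound holds with $\epsilon = 0$. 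Since all hypotheses of Theorem \ref{TheoremNoisyFullRank} (namely $\delta_{2s} < \sqrt{2}-1$ and nonsingular $\mathbf{C}$ with $\mathcal{N}(\mathbf{C})=\{0\}$) are in force, its conclusion applies verbatim with $\epsilon = 0$.

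First I would record that $(\bar{\mathbf{r}}_k, \bar{\mathbf{u}}_k)$ is feasible for ($P1$): by construction it satisfies the dynamics and $\mathbf{y}_k = \mathbf{C}\bar{\mathbf{r}}_k$. Hence a minimizer $(\mathbf{r}^*_k, \mathbf{u}^*_k)$ exists, with objective not exceeding $\sum_k \|\bar{\mathbf{u}}_k\|_{\ell_1}$. Next, applying the error bound (\ref{ErrorBound}) of Theorem \ref{TheoremNoisyFullRank} with $\epsilon = 0$ gives $\sum_{k=0}^{K-1} \|\mathbf{u}^*_k - \bar{\mathbf{u}}_k\|_{\ell_2} \leq C_s \cdot 0 = 0$, since the constant $C_s$ is independent of $\epsilon$. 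Each summand being a nonnegative norm, this forces $\mathbf{u}^*_k = \bar{\mathbf{u}}_k$ for every $k$. The corresponding states then coincide as well, either directly from Lemma \ref{lemma1} with $\epsilon = 0$, which yields $\|\mathbf{r}^*_k - \bar{\mathbf{r}}_k\|_{\ell_2} \leq 0$, or from invertibility of $\mathbf{C}$ (since $\mathcal{N}(\mathbf{C}) = \{0\}$ and $p = n$), giving $\mathbf{r}^*_k = \mathbf{C}^{-1}\mathbf{y}_k = \bar{\mathbf{r}}_k$.

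Uniqueness then follows by a short contradiction argument: the preceding step shows that \emph{every} minimizer of ($P1$) equals $(\bar{\mathbf{r}}_k, \bar{\mathbf{u}}_k)$, so if any feasible point had strictly smaller $\ell_1$ cost it would itself be a minimizer distinct from $(\bar{\mathbf{r}}_k, \bar{\mathbf{u}}_k)$, which is impossible. Hence $(\bar{\mathbf{r}}_k, \bar{\mathbf{u}}_k)$ is the unique minimizer. The main obstacle I anticipate is purely a matter of rigor at the degenerate point $\epsilon = 0$: one must confirm that the chain of lemmas feeding Theorem \ref{TheoremNoisyFullRank} (Lemma \ref{lemma1}, the cone constraint of Lemma \ref{cone}, and Lemma \ref{lemma3}) remains valid when the noise vanishes, so that the bound is genuinely continuous down to $\epsilon = 0$ rather than only asymptotic. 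Should any step implicitly require $\epsilon > 0$, I would instead give a self-contained argument: set $\mathbf{h}_k = \mathbf{u}^*_k - \bar{\mathbf{u}}_k$, observe that optimality of $(\mathbf{r}^*_k,\mathbf{u}^*_k)$ yields the cone constraint of Lemma \ref{cone}, and combine it with the RIP hypothesis $\delta_{2s} < \sqrt{2}-1$ exactly as in the classical noiseless CS proof to conclude $\mathbf{h}_k = 0$ for all $k$.
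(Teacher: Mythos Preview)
Your proposal is correct and follows essentially the same approach as the paper: both obtain the noiseless result by specializing Theorem \ref{TheoremNoisyFullRank} to $\epsilon = 0$, forcing the error vector $\mathbf{h}_k$ to vanish. The paper's argument is terser, pointing directly to the intermediate inequalities (\ref{proof1}) and (\ref{proof2}) in the proof of Lemma \ref{lemma3} rather than to the final bound (\ref{ErrorBound}), but your more explicit treatment of feasibility, the state sequence, and uniqueness is a welcome addition and introduces no discrepancy.
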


\begin{proof}
It is sufficient to consider $\epsilon = 0$ in equation (\ref{proof1}) which results in \mbox{$\|{\bf{{h}}}_{0,T_{01}(1)}\|_{\ell_2} = \cdots = \|{\bf{{h}}}_{K-1,T_{01}(K-1)}\|_{\ell_2} = 0$} from equation (\ref{proof2}), which implies that all elements of vectors ${\bf{{h}}}_0,\cdots,{\bf{{h}}}_{K-1}$ are zero and $\left({\bf{{u}}}^*_k = {\bf{{\bar{u}}}}_k\right)_{k=0}^{K-1}$.
\end{proof}

\subsection{{Case 2: Observation Matrix $C$ Satisfying Observability Condition without Input Disturbance}}
In this case, we consider (\ref{linearDynamicFormulation}) in the absence of input disturbance (${\bf{d}}_k=0$) with the linear map $C: \mathbb{R}{^{n}} \to \mathbb{R}{^{p}}$, $p<n$.   Thus, direct inversion of $C$ is not possible in this case.  For any positive $K$, we define the standard linear observability matrix as

\begin{equation}
\mathcal{O}_K \equiv \begin{pmatrix}
{\bf{C}}\\
{\bf{C}} {\bf{A}}\\
\vdots\\
{\bf{C}}{\bf{A}}^K
\end{pmatrix}
.
\end{equation}
If $rank(\mathcal{O}_K)=n$, then the system (\ref{linearDynamicFormulation}) is observable in the classical sense \footnote{The system is said to be observable if, for any initial state and for any known sequence of input there is a positive integer $K$ such that the initial state can be recovered from the outputs ${\bf{y}}_0$, ${\bf{y}}_1$,..., ${\bf{y}}_K$.
}. However, we do not assume any knowledge of the input other than the fact that it is $s$-sparse at each time.  Note that if we simply iterate the output equation in (\ref{linearDynamicFormulation}) for $K+1$ time steps and exploit the fact that the input vector is $s$-sparse as shown in Fig. \ref{SchemObservability}, we obtain:
\begin{equation}
\begin{pmatrix}
{\bf{y}}_0\\
{\bf{y}}_1\\
\vdots \\
{\bf{y}}_K
\end{pmatrix}
= \mathcal{O}_K {\bf{r}}_0 + \mathcal{J}_K^s  
\begin{pmatrix}
{\bf{u}}_0^s\\
{\bf{u}}_1^s\\
\vdots \\
{\bf{u}}_{K-1}^s
\end{pmatrix}
,
\label{SeqObserv}
\end{equation}
where $\mathcal{J}_K^s$ is as follows:
\begin{equation}
\mathcal{J}_K^s = 
\begin{pmatrix}
{\bf{0}} & {\bf{0}} &  \cdots & {\bf{0}} \\
{\bf{C}} {\bf{B}}_0^s & {\bf{0}} & \cdots & {\bf{0}} \\
{\bf{C}} {\bf{A}} {\bf{B}}_0^s & {\bf{C}} {\bf{B}}_1^s  & \cdots & {\bf{0}} \\
\vdots & \vdots & \ddots & \vdots \\
{\bf{C}} {\bf{A}}^{K-1} {\bf{B}}_0^s & {\bf{C}} {\bf{A}}^{K-2} {\bf{B}}_1^s  & \cdots &  {\bf{C}} {\bf{B}}_{K-1}^s 
\end{pmatrix}
,
\label{Jks}
\end{equation}
where ${\bf{B}}_i^s$ is the $n \times s$ matrix corresponding the active columns of ${\bf{B}}$ (corresponding to nonzero input entries) at time step $i$ (see Fig. \ref{SchemObservability}). In general, we do not know where active columns of ${\bf{B}}$ are located at each time \textit{a priori}. We define ${\bf{J}}_K^s$ as the set of all possible matrices satisfying the structure in \eqref{Jks}, where the cardinality of this set is ${{m}\choose{s}}^K$.

\begin{figure}[!t]
\centering
\vspace{-.2in}
\includegraphics[width=0.65\textwidth]{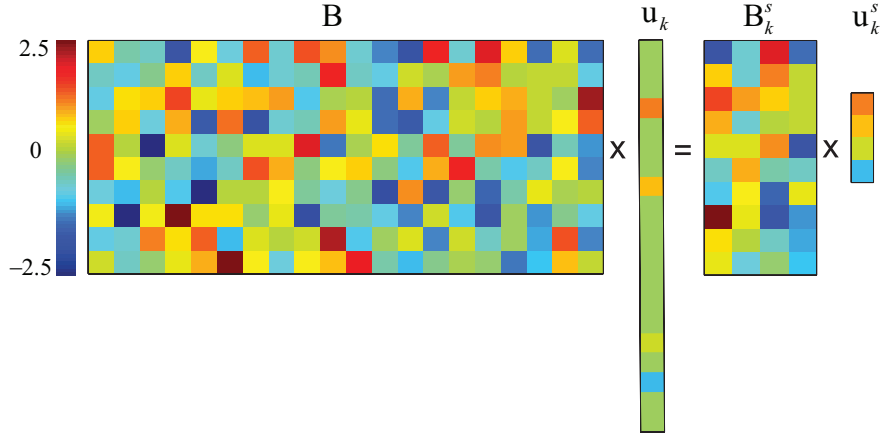}
\vspace{-.3in}
\caption{The matrix ${\bf{B}}_k^s$ is the $n \times s$ matrix corresponding the active columns of the full matrix ${\bf{B}}$ at time step $k$.} 
\label{SchemObservability}
\end{figure}

In the next Theorem, we establish conditions under which a one to one correspondence exists between sparse input and observed output for the system (\ref{linearDynamicFormulation}).

\begin{lem}
Suppose that the sequence $\left({\bf{y}}_k\right)_{k=0}^K$ from noiseless measurements is given, and ${\bf{A}}$, ${\bf{B}}$ and ${\bf{C}}$ are known. Assume $rank(\mathcal{O}_K)=n$ and the matrix ${\bf{CB}}$ satisfies the RIP condition (\ref{RIP}) with isometry constant $\delta_{2s}<1$. 
%Also for all different possible $\mathcal{J}_K^{2s}$ lets 
Further, assume 
\begin{equation}
rank([\mathcal{O}_K~~\mathcal{J}_K^{2s}])=n+rank(\mathcal{J}_K^{2s}),~ \forall \mathcal{J}_K^{2s} \in {\bf{J}}_K^{2s}.
\label{RankCondition}
\end{equation}
Then, there is a unique $s$-sparse sequence of $\left({\bf{u}}_k\right)_{k=0}^{K-1}$ and a unique sequence of $\left({\bf{r}}_k\right)_{k=0}^K$ that generate $\left({\bf{y}}_k\right)_{k=0}^K$. 
\label{UniquenessGeneralC} 
\end{lem}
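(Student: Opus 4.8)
The plan is to prove uniqueness by contradiction in the usual compressed-sensing fashion: I would suppose two admissible pairs $\left({\bf{\bar{r}}}_0,({\bf{\bar{u}}}_k)_{k=0}^{K-1}\right)$ and $\left({\bf{r}}_0^*,({\bf{u}}_k^*)_{k=0}^{K-1}\right)$, both with $s$-sparse inputs, generate the same output sequence $({\bf{y}}_k)_{k=0}^K$. Writing the aggregated observation equation \eqref{SeqObserv} for each pair and subtracting, the output vector cancels and I obtain the homogeneous relation
\[
\mathcal{O}_K\,\Delta{\bf{r}}_0 + \mathcal{J}_K^{2s}\,{\bf{w}} = {\bf{0}},
\]
where $\Delta{\bf{r}}_0 = {\bf{\bar{r}}}_0 - {\bf{r}}_0^*$ and ${\bf{w}}$ stacks the input differences ${\bf{w}}_k = {\bf{\bar{u}}}_k - {\bf{u}}_k^*$. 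The crucial structural observation is that each ${\bf{w}}_k$ is supported on the union of the supports of ${\bf{\bar{u}}}_k$ and ${\bf{u}}_k^*$ and is therefore $2s$-sparse; consequently the associated column-selection matrix is a legitimate member $\mathcal{J}_K^{2s}\in{\bf{J}}_K^{2s}$ (padding the active set to exactly $2s$ if the union is smaller), which is precisely the family quantified over in \eqref{RankCondition}.

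Next I would exploit \eqref{RankCondition} together with $rank(\mathcal{O}_K)=n$. By the Grassmann dimension identity,
\[
rank([\mathcal{O}_K~~\mathcal{J}_K^{2s}]) = rank(\mathcal{O}_K) + rank(\mathcal{J}_K^{2s}) - \dim\!\left(\mathrm{col}(\mathcal{O}_K)\cap\mathrm{col}(\mathcal{J}_K^{2s})\right),
\]
so the hypothesis forces $\mathrm{col}(\mathcal{O}_K)\cap\mathrm{col}(\mathcal{J}_K^{2s})=\{{\bf{0}}\}$. Rearranging the homogeneous relation as $\mathcal{O}_K\,\Delta{\bf{r}}_0 = -\mathcal{J}_K^{2s}\,{\bf{w}}$ exhibits a single vector lying in both column spaces; by triviality of the intersection it must vanish, yielding the two decoupled equations $\mathcal{O}_K\,\Delta{\bf{r}}_0 = {\bf{0}}$ and $\mathcal{J}_K^{2s}\,{\bf{w}} = {\bf{0}}$. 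Since $\mathcal{O}_K$ has $n$ columns and rank $n$, its nullspace is trivial, so $\Delta{\bf{r}}_0 = {\bf{0}}$ and the two initial states coincide.

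It remains to show ${\bf{w}}={\bf{0}}$, and here I would use the block lower-triangular structure of $\mathcal{J}_K^{2s}$ in \eqref{Jks} together with the RIP of ${\bf{CB}}$. Reading the block rows of $\mathcal{J}_K^{2s}\,{\bf{w}}={\bf{0}}$ from the top, the first nontrivial row gives ${\bf{C}}{\bf{B}}_0^{2s}{\bf{w}}_0^{2s} = {\bf{C}}{\bf{B}}\,{\bf{w}}_0 = {\bf{0}}$; because ${\bf{w}}_0$ is $2s$-sparse and $\delta_{2s}<1$, the lower RIP bound \eqref{RIP} gives $(1-\delta_{2s})\|{\bf{w}}_0\|_{\ell_2}^2 \le \|{\bf{C}}{\bf{B}}\,{\bf{w}}_0\|_{\ell_2}^2 = 0$, forcing ${\bf{w}}_0={\bf{0}}$. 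Proceeding by forward induction, once ${\bf{w}}_0=\cdots={\bf{w}}_{c-1}={\bf{0}}$ all earlier off-diagonal terms ${\bf{C}}{\bf{A}}^{c-j}{\bf{B}}_j^{2s}{\bf{w}}_j^{2s}$ vanish, so the $(c+1)$-th block row collapses to ${\bf{C}}{\bf{B}}_c^{2s}{\bf{w}}_c^{2s}={\bf{C}}{\bf{B}}\,{\bf{w}}_c={\bf{0}}$, and the same RIP argument gives ${\bf{w}}_c={\bf{0}}$. Hence every ${\bf{w}}_k={\bf{0}}$, so ${\bf{\bar{u}}}_k={\bf{u}}_k^*$ for all $k$, and the state trajectory $({\bf{r}}_k)$ is then uniquely determined by iterating ${\bf{r}}_{k+1}={\bf{A}}{\bf{r}}_k+{\bf{B}}{\bf{u}}_k$ from the common initial state.

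The step I expect to be the main obstacle is the support bookkeeping in the first paragraph: since ${\bf{\bar{u}}}_k$ and ${\bf{u}}_k^*$ may have different, unknown supports, I must justify carefully that the single matrix $\mathcal{J}_K^{2s}$ encoding their difference genuinely belongs to the family ${\bf{J}}_K^{2s}$ over which \eqref{RankCondition} is imposed. This is exactly why the hypothesis is stated with a universal quantifier over all $\mathcal{J}_K^{2s}\in{\bf{J}}_K^{2s}$, and it is the point most prone to a gap if argued loosely; once the correct $\mathcal{J}_K^{2s}$ is fixed and the column-space decoupling is in place, the remaining conclusions are essentially forced.
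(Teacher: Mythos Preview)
Your proposal is correct and follows essentially the same route as the paper's proof: both argue by contradiction, subtract the two aggregated observation equations to obtain $\mathcal{O}_K\,\Delta{\bf{r}}_0 + \mathcal{J}_K^{2s}{\bf{w}}={\bf{0}}$ with ${\bf{w}}$ blockwise $2s$-sparse, use the rank hypothesis \eqref{RankCondition} to force $\Delta{\bf{r}}_0={\bf{0}}$, and then invoke the RIP of ${\bf{CB}}$ to peel off ${\bf{w}}_0,\ldots,{\bf{w}}_{K-1}$ sequentially. Your write-up is in fact more explicit than the paper's (you spell out the Grassmann dimension identity and the forward induction on the block rows, whereas the paper simply asserts linear independence of the columns and says ``it is easy to see'' for the input uniqueness), but the underlying argument is the same.
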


\begin{proof}
See Appendix \ref{AppE}.
\end{proof}

{
\begin{rem}
The rank condition implies that all columns of the observability matrix must be linearly independent of each other (i.e., the network is observable in the classical sense) and of all columns of $\mathcal{J}_K^s$.  Since the exact location of the nonzero elements of the input vector are not known \textit{a priori}, this condition is specified over all $\mathcal{J}_K^{2s}$. Thus, \eqref{RankCondition} is a combinatorial condition.  From our simulation studies, we observe that this condition holds for random Gaussian matrices almost always and, moreover, can be numerically verified for certain salient random networks (see also Example 3 in Section \ref{examples}). 
\end{rem}
}

Having established the existence of a unique solution, we now proceed to study the convex problems ($P1$) and ($P2$) that recover these solutions for this case. First, we provide theoretical results for the stable recovery of the input where measurements are noisy i.e., (P2).

\begin{thm}{\textit{(Noisy recovery)}} %from problem ($P2$)
Assume $rank(\mathcal{O}_K)=n$ and the matrix \mbox{${\bf{CB}}$} satisfies the RIP condition (\ref{RIP}) with $\delta_{2s} < \sqrt{2}-1$. assume \eqref{RankCondition} holds. Suppose that the sequence $\left({\bf{y}}_k\right)_{k=0}^K$ is given and generated from sequences $\left({\bf{\bar{r}}}_k\right)_{k=0}^K$ and $s$-sparse $\left({\bf{\bar{u}}}_k\right)_{k=0}^{K-1}$ based on \eqref{linearDynamicFormulationwithnoise}, where $\left(\| {\bf{e}}_k \|_{\ell_2} \leq \epsilon\right)_{k=0}^K$  and ${\bf{A}}$, ${\bf{B}}$ and ${\bf{C}}$ are known. Then, any $s$-sparse solution of Problem ($P2$) obeys
\begin{equation}
\exists~ C_s ~such~that~\sum_{k=0}^{K-1} \| {\bf{u}}^*_k - {\bf{\bar{u}}}_k \|_{\ell_2} \leq C_s \epsilon.
\label{ErrorBoundObservability}
\end{equation}
\label{TheoremNoisyObservability}
\end{thm}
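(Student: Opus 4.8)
The plan is to collapse the horizon into a single under-determined linear system and then exploit the rank condition \eqref{RankCondition} to obtain a uniform injectivity bound, in close analogy with the proof of Theorem \ref{TheoremNoisyFullRank}. Let $\left({\bf{r}}^*_k\right)$, $\left({\bf{u}}^*_k\right)$ be the assumed $s$-sparse optimizer of ($P2$), and set ${\bf{h}}_k \equiv {\bf{u}}^*_k - {\bf{\bar{u}}}_k$, $\Delta{\bf{r}}_0 \equiv {\bf{r}}^*_0 - {\bf{\bar{r}}}_0$. First I would stack the output equations for $k=0,\dots,K$ exactly as in \eqref{SeqObserv}, once for the generating pair $({\bf{\bar{r}}}_0,{\bf{\bar{u}}}_k)$ and once for the optimizer $({\bf{r}}^*_0,{\bf{u}}^*_k)$; here ${\bf{\bar{r}}}_k$ and ${\bf{r}}^*_k$ both satisfy the dynamics, so each admits the representation underlying \eqref{SeqObserv}. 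The data obey $\|{\bf{y}}_k - {\bf{C}}{\bf{\bar{r}}}_k\|_{\ell_2} = \|{\bf{e}}_k\|_{\ell_2} \le \epsilon$, while ($P2$) feasibility forces $\|{\bf{y}}_k - {\bf{C}}{\bf{r}}^*_k\|_{\ell_2} \le \epsilon$; hence subtracting the two stacked identities eliminates the common left-hand side and yields
\[
\mathcal{O}_K\,\Delta{\bf{r}}_0 + \mathcal{J}_K\,{\bf{H}} = {\bf{w}}, \qquad \|{\bf{w}}\|_{\ell_2} \le 2\sqrt{K+1}\,\epsilon,
\]
where ${\bf{H}} = ({\bf{h}}_0^T,\dots,{\bf{h}}_{K-1}^T)^T$, $\mathcal{J}_K$ is the block lower-triangular operator of \eqref{Jks} built from the full matrix ${\bf{B}}$, and ${\bf{w}}$ is the difference of the two stacked residuals (each of $\ell_2$-norm at most $\sqrt{K+1}\,\epsilon$).

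The key observation is that each ${\bf{h}}_k$ is the difference of two $s$-sparse vectors, hence $2s$-sparse, so the only columns of $\mathcal{J}_K$ acting on ${\bf{H}}$ are those of some pattern $\mathcal{J}_K^{2s} \in {\bf{J}}_K^{2s}$. Thus the left-hand side equals $[\mathcal{O}_K~~\mathcal{J}_K^{2s}]\,{\bf{z}}$ with ${\bf{z}} = (\Delta{\bf{r}}_0^T,\,{\bf{H}}^T)^T$ supported on that pattern. Next I would argue that this combined matrix has full column rank. Because ${\bf{CB}}$ satisfies the RIP with $\delta_{2s} < \sqrt{2}-1 < 1$, every selection of $2s$ columns of ${\bf{CB}}$ is injective; since $\mathcal{J}_K^{2s}$ is block lower-triangular with such selections as its diagonal blocks ${\bf{C}}{\bf{B}}_i^{2s}$, it follows that $\mathcal{J}_K^{2s}$ itself has full column rank. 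The rank condition \eqref{RankCondition} then gives $rank([\mathcal{O}_K~~\mathcal{J}_K^{2s}]) = n + rank(\mathcal{J}_K^{2s})$, i.e.\ the combined matrix has full column rank, and this holds for every pattern in ${\bf{J}}_K^{2s}$.

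Because ${\bf{J}}_K^{2s}$ is finite and each member yields a combined matrix of full column rank (hence strictly positive smallest singular value), I would define $\nu \equiv \min_{\mathcal{J}_K^{2s} \in {\bf{J}}_K^{2s}} \sigma_{min}\big([\mathcal{O}_K~~\mathcal{J}_K^{2s}]\big) > 0$. Applying this uniform bound to ${\bf{z}}$ gives $\|{\bf{z}}\|_{\ell_2} \le \nu^{-1}\,2\sqrt{K+1}\,\epsilon$, and in particular $\|{\bf{H}}\|_{\ell_2} \le 2\nu^{-1}\sqrt{K+1}\,\epsilon$. A Cauchy--Schwarz step over the $K$ input blocks then yields $\sum_{k=0}^{K-1}\|{\bf{h}}_k\|_{\ell_2} \le \sqrt{K}\,\|{\bf{H}}\|_{\ell_2} \le C_s\,\epsilon$ with $C_s = 2\nu^{-1}\sqrt{K(K+1)}$, which is precisely \eqref{ErrorBoundObservability}.

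The main obstacle is making the injectivity both quantitative and genuinely uniform without tacitly using the unknown support: the constant $\nu$ is a minimum over the combinatorially large family ${\bf{J}}_K^{2s}$, and one must verify carefully that the RIP of ${\bf{CB}}$ together with \eqref{RankCondition} forces every member to be full column rank (the block-triangular structure handles the diagonal blocks, while \eqref{RankCondition} supplies independence from the observability columns). A secondary point is that this route relies on the hypothesis that the minimizer is itself $s$-sparse, so that ${\bf{H}}$ is block-$2s$-sparse; dispensing with that assumption would instead require re-deriving the cone constraint of Lemma \ref{cone} and re-running the $\delta_{2s} < \sqrt{2}-1$ argument of Theorem \ref{TheoremNoisyFullRank} in this stacked setting, now with ${\bf{CB}}$ in the role previously played by ${\bf{B}}$.
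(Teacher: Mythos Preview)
Your argument is correct but proceeds along a genuinely different line from the paper's. Both proofs start identically: stack the output equations over the horizon for the generating pair and for the optimizer, subtract, and use the assumed $s$-sparsity of $({\bf{u}}^*_k)$ to localize the difference ${\bf{H}}$ on some pattern $\mathcal{J}_K^{2s}\in{\bf{J}}_K^{2s}$. From there the two arguments diverge. The paper applies the orthogonal projection $I-{\bf{P}}_{\mathcal{J}_K^{2s}}$ to annihilate the input block and isolate a bound $\|{\bf{r}}^*_0-{\bf{\bar r}}_0\|_{\ell_2}\le C_{\mathcal{J}}\epsilon$ on the initial-state error; it then propagates this bound forward step by step, invoking at each $k$ the RIP of ${\bf{CB}}$ together with the Cand\`es-style decomposition of Appendices~\ref{AppB} and~\ref{AppD} to obtain $\|{\bf{u}}^*_k-{\bf{\bar u}}_k\|_{\ell_2}\le C_k\epsilon$. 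You instead show directly that $[\mathcal{O}_K~~\mathcal{J}_K^{2s}]$ has full column rank (block lower-triangularity of $\mathcal{J}_K^{2s}$ plus RIP on ${\bf{CB}}$ gives $\mathcal{J}_K^{2s}$ full column rank; \eqref{RankCondition} then supplies independence from $\mathcal{O}_K$), take $\nu$ as the minimum smallest singular value over the finite family ${\bf{J}}_K^{2s}$, and bound $(\Delta{\bf{r}}_0,{\bf{H}})$ in one shot.

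Your route is shorter and entirely bypasses the cone-constraint/RIP machinery of Lemmas~\ref{cone}--\ref{lemma3}; the price is that $C_s=2\nu^{-1}\sqrt{K(K+1)}$ is opaque, since $\nu$ is a minimum over a combinatorial family. The paper's two-stage structure (state error first, then sequential input recovery via RIP of ${\bf{CB}}$) is more modular and mirrors the architecture of Theorem~\ref{TheoremNoisyFullRank}, making clearer how the constant decomposes into a state-observability part and an input-RIP part. For the bare existence statement \eqref{ErrorBoundObservability}, however, your argument is fully sufficient.
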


\begin{proof}
See Appendix \ref{AppF}.
\end{proof}

\begin{thm}{\textit{(Noiseless recovery)}} %from problem ($P1$))
Assume $rank(\mathcal{O}_K)=n$ and the matrix \mbox{${\bf{CB}}$} satisfies the RIP condition (\ref{RIP}) with \mbox{$\delta_{2s} < \sqrt{2}-1$}. Further, assume \eqref{RankCondition} holds.
Suppose that the sequence $\left({\bf{y}}_k\right)_{k=0}^K$ is given and generated from sequences $\left({\bf{\bar{r}}}_k\right)_{k=0}^K$ and $s$-sparse inputs $\left({\bf{\bar{u}}}_k\right)_{k=0}^{K-1}$ based on dynamical equation (\ref{linearDynamicFormulationwithnoise}), where $\epsilon=0$  and ${\bf{A}}$, ${\bf{B}}$ and ${\bf{C}}$ are known. Then the sequences $\left({\bf{\bar{r}}}_k\right)_{k=0}^K$ and $\left({\bf{\bar{u}}}_k\right)_{k=0}^{K-1}$ are the unique minimizer to Problem ($P1$).
\label{TheoremNoiselessObservability}
\end{thm}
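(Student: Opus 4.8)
The plan is to obtain this statement as the zero-noise specialization of Theorem \ref{TheoremNoisyObservability}, exactly as Theorem \ref{TheoremNoiselessFullRank} was obtained from Theorem \ref{TheoremNoisyFullRank} in Case 1, and then to supplement the resulting input recovery with an observability argument that pins down the states. This last step is genuinely needed here: in Case 1 one simply inverts ${\bf{C}}$, but when $p<n$ we must instead invoke $rank(\mathcal{O}_K)=n$.

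First I would record feasibility and the cone constraint. Since $\epsilon=0$, the generating relations \eqref{linearDynamicFormulationwithnoise} reduce to ${\bf{\bar{r}}}_{k+1}={\bf{A}}{\bf{\bar{r}}}_k+{\bf{B}}{\bf{\bar{u}}}_k$ and ${\bf{y}}_k={\bf{C}}{\bf{\bar{r}}}_k$, so $\big(({\bf{\bar{r}}}_k)_{k=0}^{K},({\bf{\bar{u}}}_k)_{k=0}^{K-1}\big)$ is feasible for (P1). Let $\big(({\bf{r}}^*_k),({\bf{u}}^*_k)\big)$ be any minimizer and set ${\bf{h}}_k={\bf{u}}^*_k-{\bf{\bar{u}}}_k$. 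By $\ell_1$-optimality, $\sum_k\|{\bf{u}}^*_k\|_{\ell_1}\le\sum_k\|{\bf{\bar{u}}}_k\|_{\ell_1}$, and since ${\bf{\bar{u}}}_k$ is $s$-sparse, the splitting of each ${\bf{h}}_k$ into $T_0(k),T_1(k),\dots$ yields the cone constraint of Lemma \ref{cone} with ${\bf{CB}}$ in place of ${\bf{B}}$; this is precisely the construction carried out in the proof of Theorem \ref{TheoremNoisyObservability} in Appendix \ref{AppF}. I emphasize that this uses only sparsity of the ground truth ${\bf{\bar{u}}}$, not of ${\bf{u}}^*$, so it applies to an arbitrary minimizer of (P1)—a point that matters because the $\ell_1$ minimizer need not itself be $s$-sparse.

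Next I would reuse the RIP bound of Appendix \ref{AppF}. The block-lower-triangular form of $\mathcal{J}_K^{s}$ in \eqref{Jks}, together with the rank condition \eqref{RankCondition}, isolates the contribution of each ${\bf{CB}}\,{\bf{h}}_k$ from that of the unknown initial state, so the RIP of ${\bf{CB}}$ with $\delta_{2s}<\sqrt{2}-1$ bounds $\sum_k\|{\bf{h}}_{k,T_{01}(k)}\|_{\ell_2}$ by a constant multiple of the measurement error. Setting $\epsilon=0$ collapses this to $\sum_k\|{\bf{h}}_{k,T_{01}(k)}\|_{\ell_2}=0$, and with the cone constraint this forces ${\bf{h}}_k=0$, i.e. ${\bf{u}}^*_k={\bf{\bar{u}}}_k$ for every $k$; in particular $\big(({\bf{\bar{r}}}_k),({\bf{\bar{u}}}_k)\big)$ attains the minimal objective and is a minimizer. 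Finally, writing $\Delta_k={\bf{r}}^*_k-{\bf{\bar{r}}}_k$, the dynamics constraint with ${\bf{u}}^*={\bf{\bar{u}}}$ gives $\Delta_{k+1}={\bf{A}}\Delta_k$, hence $\Delta_k={\bf{A}}^k\Delta_0$, while the noiseless observation constraints give ${\bf{C}}\Delta_k={\bf{y}}_k-{\bf{y}}_k=0$ for $k=0,\dots,K$; these are exactly ${\bf{C}}{\bf{A}}^k\Delta_0=0$, i.e. $\mathcal{O}_K\Delta_0=0$, and $rank(\mathcal{O}_K)=n$ forces $\Delta_0=0$ and thus $\Delta_k=0$ for all $k$, giving uniqueness. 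The main obstacle is the middle step: because ${\bf{C}}$ cannot be inverted, the RIP argument must run on the stacked map $\mathcal{J}_K^{2s}$, and the rank condition \eqref{RankCondition} is what guarantees that the unknown initial state does not corrupt the per-step application of the RIP of ${\bf{CB}}$—so the delicate verification is that the $\epsilon\to 0$ limit of that argument remains valid for a possibly non-sparse minimizer of (P1).
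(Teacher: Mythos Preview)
Your proposal follows essentially the same route as the paper: the paper's proof is a one-liner invoking Theorem \ref{TheoremNoisyObservability} at $\epsilon=0$ together with Lemma \ref{UniquenessGeneralC}, and your argument is precisely an unpacking of that, with your direct observability computation $\mathcal{O}_K\Delta_0=0$ playing the role the paper assigns to Lemma \ref{UniquenessGeneralC}. Your explicit flagging of the $s$-sparsity hypothesis on the minimizer (needed in Appendix \ref{AppF} for the projection onto the complement of $\mathcal{J}_K^{2s}$) is a point of care the paper does not spell out, but the overall strategy is identical.
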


\begin{proof}
It can be concluded from Lemma \ref{UniquenessGeneralC} and Theorem \ref{TheoremNoisyObservability} that with the assumption stated in the theorem and $\epsilon=0$ the sequences $\left({\bf{\bar{r}}}_k\right)_{k=0}^K$ and $\left({\bf{\bar{u}}}_k\right)_{k=0}^{K-1}$ are the unique minimizer to Problem ($P1$). 
\end{proof}

{
\begin{rem}
Imposing an RIP condition on the combined matrix ${\bf{CB}}$ bears some conceptual similarity to the formulation of an overcomplete dictionary in the classical compressed sensing literature \cite{candes2006fast,candes2004new}.  In this sense, the ${\bf{B}}$ matrix (i.e., the afferent stage) can be interpreted as a dictionary that transforms the sparse input ${\bf{u}}$ onto the recurrent network states.
\end{rem}
}
\subsection{{{Case 3: Optimal Network Design to Enable Recovery in the Presence of Disturbance and Noise}}}\label{secDisNoise}
Finally, we show how eigenstructure of the network implies a fundamental tradeoff between stable recovery and rejection of disturbance (i.e., corruption).

It is easy to see from \eqref{ErrorBound} that the upper-bound of the recovery error is reduced by decreasing the maximum singular value of ${\bf{A}}$.  Thus, from now on we use the upper-bound of the input recovery error as a comparative measure of performance. In the absence of both disturbance and noise, the best error performance is achieved when ${\bf{A}=0}$, i.e., the network is static, which in intuitive since in this scenario any temporal effects would smear the salient parts of the signal. 

On the other hand, having dynamics in the network should improve the error performance in the presence of the disturbance.  To demonstrate this, consider \eqref{linearDynamicFormulation} with $\bf{d}_k$ nonzero. When ${\bf{A}=0}$, i.e., a static network, the disturbance can be exactly transformed to the measurement equation resulting in $ {\bf{C}} {\bf{d}}_k+{\bf{e}}_k$ as a surrogate measurement noise with
\begin{equation}
\begin{aligned}
&{\|\bf{C}} {\bf{d}}_k+{\bf{e}}_k\|_{\ell_2} \leq \sqrt{\sigma^{\prime}}\epsilon^{\prime} + \epsilon,\\
&{\sigma_{min}\left({\bf{{C}}}^{T}{\bf{{C}}}\right)} < \sigma^{\prime} < { \sigma_{max}\left({\bf{{C}}}^{T}{\bf{{C}}}\right)}.
\end{aligned}
\end{equation}
In this case, the error upper-bound can be obtained by exploiting the result of Theorem \ref{TheoremNoisyFullRank} as
\begin{equation}
\begin{aligned}
&\sum_{k=0}^{K-1} \| {\bf{u}}^*_k - {\bf{\bar{u}}}_k \|_{\ell_2} \leq C^{\prime}_s (\sqrt{\sigma^{\prime}}\epsilon^{\prime} + \epsilon),\\
&C^{\prime}_s = \frac{2}{\sqrt{\sigma}}\alpha   K(1-\rho)^{-1}.
\end{aligned}
\label{BoundWithoutA}
\end{equation}

When ${\bf{A}}$ is nonzero, it is not possible to exactly map the disturbance to the output as above.  Nevertheless, it is straightforward to approximate the relative improvement in performance.  For instance, consider a system with ${\bf{A}}$ symmetric and where the disturbance and input are in displaced frequency bands.  Then it is a direct consequence of linear filtering that the power spectral density of the disturbance can be attenuated according to  
\begin{equation}
{\bf{\mathcal{S}}}_{{\bf{{d}}^{Filt}}}(e^{j\omega})={s_{\bf{d}}(e^{j\omega})}(e^{j\omega}{\bf{I}}_n - {\bf{A}})^{-1}  (e^{-jw}{\bf{I}}_n - {\bf{A}})^{-1},
\end{equation}
where $\omega$ is the frequency of the disturbance.  So, for instance, if $\omega=\pi$,
\begin{equation}
\begin{aligned}
\Tr \{ {\bf{\mathcal{S}}}_{{\bf{{d}}^{Filt}}}(e^{j\pi}) \} &= \Tr \{ {s_{\bf{d}}(e^{j\pi})} ({\bf{I}}_n + {\bf{A}})^{-2} \} \\
& = {s_{\bf{d}}(e^{j\pi})}\sum_{i=1}^n (1 + \lambda_i (A) )^{-2},
\end{aligned}
\label{powerSpectrum}
\end{equation}
where $\lambda_i (A)$ is the $i^{th}$ eigenvalue of matrix ${\bf{A}}$.  %Thus, we can conclude from \eqref{powerSpectrum} that the network should function as a low-pass filter with eigenvalues of ${\bf{A}}$ that are positive and close to (but, less than) one.
%For instance, with a narrow-band high frequency disturbance centered at $\omega=\pi$, the disturbance power is attenuated by a factor of  $\sum_{i=1}^n (1 + \lambda_i (A) )^{-2}$ and
Assuming the input is sufficiently displaced in frequency from the disturbance, the error upper-bound can be then readily approximated using the results of Theorem \ref{TheoremNoisyFullRank} as follows
\begin{equation}
\begin{aligned}
& \sum_{k=0}^{K-1} \| {\bf{u}}^*_k - {\bf{\bar{u}}}_k \|_{\ell_2} \leq C_s \left(\frac{{\sqrt{\sigma^{\prime\prime}}}}{n}{\sum_{i=1}^n (1 + \lambda_i (A) )^{-2}} \epsilon^{\prime} + \epsilon \right), \\
%&C^{\prime\prime}_s= \frac{2 \alpha   K(1-\rho)^{-1}}{\sqrt{\sigma}} \left( 1 + \sqrt{\frac{\sigma_{max}\left({\bf{{C}}}^{T}{\bf{{C}}}\right) \sigma_{max}\left({\bf{{A}}}^{T}{\bf{{A}}}\right)}{ \sigma_{min}\left({\bf{{C}}}^{T}{\bf{{C}}}\right)}} \right),\\
&\qquad\qquad\quad{\sigma_{min}\left({\bf{{C}}}^{T}{\bf{{C}}}\right)} < \sigma^{\prime\prime} < { \sigma_{max}\left({\bf{{C}}}^{T}{\bf{{C}}}\right)}.
\end{aligned}
\label{BoundWithA}
\end{equation}
By comparing \eqref{BoundWithoutA} and \eqref{BoundWithA}, it is easy to verify that {\bf{A}} and {\bf{C}} can be designed in a way to reduce error upper-bound at least by a factor of two, assuming $\sigma^{\prime}$ and $\sigma^{\prime\prime}$ are close to each other.  In the examples below, we will show that, in fact, performance in many cases can exceed this bound considerably.
%&{\sigma_{min}\left({\bf{{C}}}^{T}{\bf{{C}}}\right)} < \sigma^{\prime} < { \sigma_{max}\left({\bf{{C}}}^{T}{\bf{{C}}}\right)}.

\section{Examples}\label{examples}
In this section, we present several examples that demonstrate the developed results. For solving our convex optimization problems, we used CVX with MATLAB interface \cite{cvx,gb08}. To create example networks, we generated the matrices ${\bf{A}}$, ${\bf{B}}$, ${\bf{C}}$ using a Gaussian random number generator in MATLAB.

% It is well known that random matrices with independent identically distributed (i.i.d) entries satisfy the RIP condition with overwhelming probability for specific values of $s$.

\begin{figure}[!t]
\centering
\includegraphics[width=0.65\textwidth]{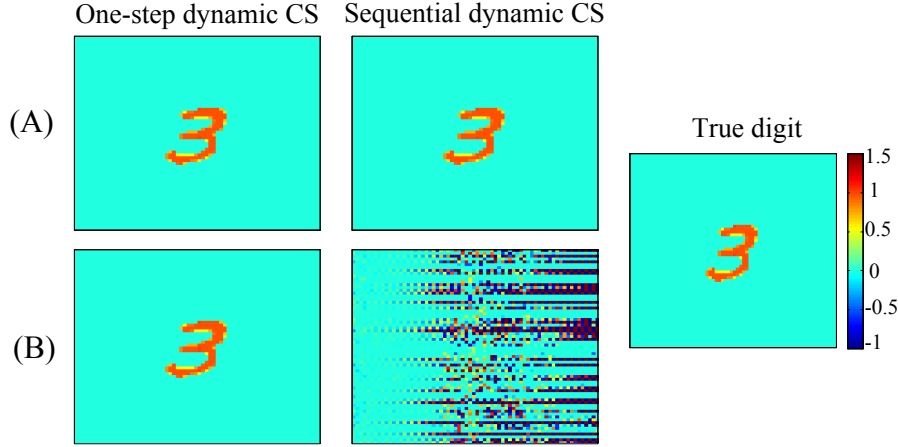}
\caption{The recovered input for (A) $p=n$ (B) $p=35$ for both static (middle images) and dynamic (left images) CS where $n=45$ and $m=68$. Original input is in the right hand side denoted as true digit.}
\label{ObservabilityCS}
\end{figure}

\subsection{Example 1: One-step and Sequential Recovery}
In this experiment, we consider a dynamical system with sparse input which satisfies conditions in Theorem \ref{TheoremNoisyObservability}. Here, we consider random Gaussian matrices for ${\bf{A}}$, ${\bf{B}}$ and ${\bf{C}}$, with $n=45$, $m=68$.  The input is defined as the image of a digit, shown in Fig. \ref{ObservabilityCS} with values between 0 and 1, where the horizontal axis is treated as time, i.e., column $k$ of the image is the input to the system at time $k$.

We proceed to perform input recovery in two ways: (i) by solving ($P1$) in one step over the entire horizon $K$, i.e., one-step recovery; and (ii) by solving ($P1$) $K$ times, sequentially, i.e., recovery at each time step.  We compare the outcomes for two cases:   

\paragraph{Full Rank $C$}  Fig. \ref{ObservabilityCS}A shows the recovered input for the case that $p=n$ for both one-step and sequential recovery, and it can be seen that sparse input can be recovered in two cases perfectly. This is expected, since in this case, $C$ can be inverted at each time step.  

\paragraph{$C$ Satisfying Observability Condition}  Fig. \ref{ObservabilityCS}B of the figure illustrates the results for the case that $p=35$, but where $C$ satisfies the observability condition.  It is clear that sequential dynamic CS can not recover the input exactly.  However, from our results (Theorem \ref{TheoremNoiselessObservability}) we expect that one-step recovery (over the entire horizon) is possible, as is evidenced in the figure.

% while perfect recovery is possible based on one-step dynamic CS. 

%Fig. \ref{ObservabilityCS} shows the simulation results for this example under noiseless measurements. Input to the dynamical system at each time is a column of the top right picture in this figure shown as the true digit.  Here, we want to compare input recovery for two cases. In the first case which is called one-step dynamic CS, Problem ($P1$) is solved once over entire horizon $K$, while in the second case, $K$ optimization problems are solved to recover the sparse input which is denoted as sequential dynamic CS. Fig. \ref{ObservabilityCS}A shows the recovered input for the case that $p=n$ for both one-step and sequential dynamic CS, and it can be seen that sparse input can be recovered in two cases perfectly. In this example $n=45$, $m=68$. Panel B of the figure illustrates the results for the case that $p=35$, and it is clear that sequential dynamic CS can not recover the input exactly while perfect recovery is possible based on one-step dynamic CS. 

%In Fig. \ref{SE} we show that how $\sqrt{\sigma_{max}{({\bf{A}}^T {\bf{A}})}}$ can not be arbitrary small in the presence of input disturbance, and the goal is to both reject disturbance and observation noise. In this situation, the squared error for the estimated input is going to be lower if the system dynamic act as a low pass filter to be able to reject the input disturbance. In this situation, it can be concluded from the figure that the input recovery performance is improved by increasing $\sqrt{\sigma_{max}{({\bf{A}}^T {\bf{A}})}}$ toward one. 

\subsection{Example 2: Recovery in the Presence of Disturbance and Noise}

Fig. \ref{MSE}A shows the mean square error ($M\!S\!E$) versus the maximum singular value of ${\bf{A}}$, for several random realization of ${\bf{A}}$, in the case of full rank $C$. In this study, ${\bf{e}}_k$ is assumed to follow an uniform distribution $\mathcal{U}(-0.5, 0.5)$ while $\bf{d}_k=0$. It can be seen from this figure that by increasing $\sqrt{\sigma_{max}{({\bf{A}}^T {\bf{A}})}}$, the recovery performance is degraded, as we expect based on the derived bound for the error in \eqref{ErrorBound}.

\begin{figure}[!t]
	\centering
	\vspace{-.1in}
	\includegraphics[width=0.65\textwidth]{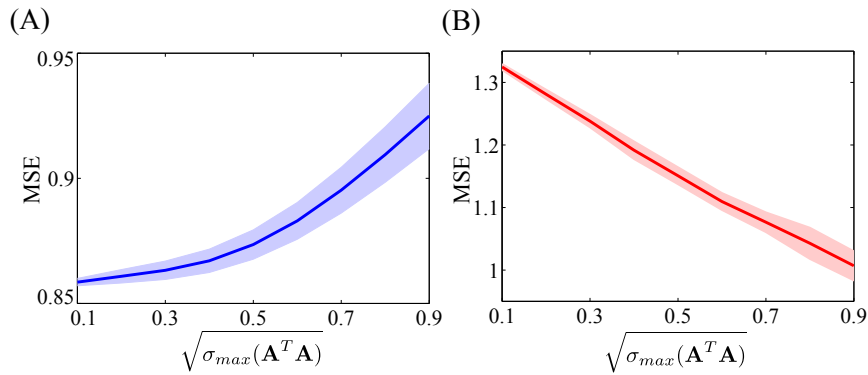}
	\caption{(A) $M\!S\!E$ versus the maximum singular value of ${\bf{A}}$, for several random realization of ${\bf{A}}$ with noise and in the absence of disturbance. (B) $M\!S\!E$ versus the maximum singular value of ${\bf{A}}$, for several random diagonal ${\bf{A}}$ with noise and disturbance.}
	\label{MSE}
\end{figure}

To contrast Fig. \ref{MSE}A, we consider the case when disturbance is added to the input. In Fig. \ref{MSE}B, we show the $M\!S\!E$ versus $\sqrt{\sigma_{max}{({\bf{A}}^T {\bf{A}})}}$ for several random diagonal matrices ${\bf{A}}$ when ${\bf{e}}_k \sim \mathcal{U}(-0.5, 0.5)$ and ${\bf{d}}_k \sim \mathcal{N}(0, 1)$. As expected from our results, $\sqrt{\sigma_{max}{({\bf{A}}^T {\bf{A}})}}$ can not be arbitrary small, since in this case the disturbance would entirely corrupt the input. 

{We conducted simulation experiments to examine the effect of the noise and disturbance strength on the reconstruction error. Fig. \ref{MSEvsNoisePower} shows the average $M\!S\!E$ for the reconstructed input versus $log(1/\epsilon)$ and $log(1/\epsilon^\prime)$, respectively with $n = 50$, $m = 100$ for $100$ random trials (different random matrices A, B, C in each trial). This figure shows that the reconstruction error decreases as a function of noise energy.}

\begin{figure}[!t] 
	\centering
	\includegraphics[width=0.65\textwidth]{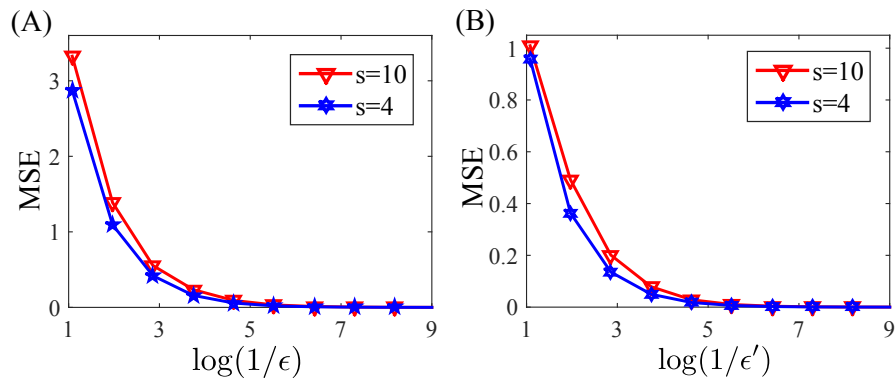}
	\caption{{$M\!S\!E$ as a function of (A) $log(1/\epsilon)$ and (B) $log(1/\epsilon^\prime)$ for the reconstructed input with $n = 50$, $m = 100$ over $100$ random trials (different random matrices $\textbf{A}$, $\textbf{B}$, $\textbf{C}$ in each trial).}}
	\label{MSEvsNoisePower}
	\vspace{.15 in}
\end{figure}

\begin{figure}[!t]
	\centering
	\vspace{-.1in}
	\includegraphics[width=0.7\textwidth]{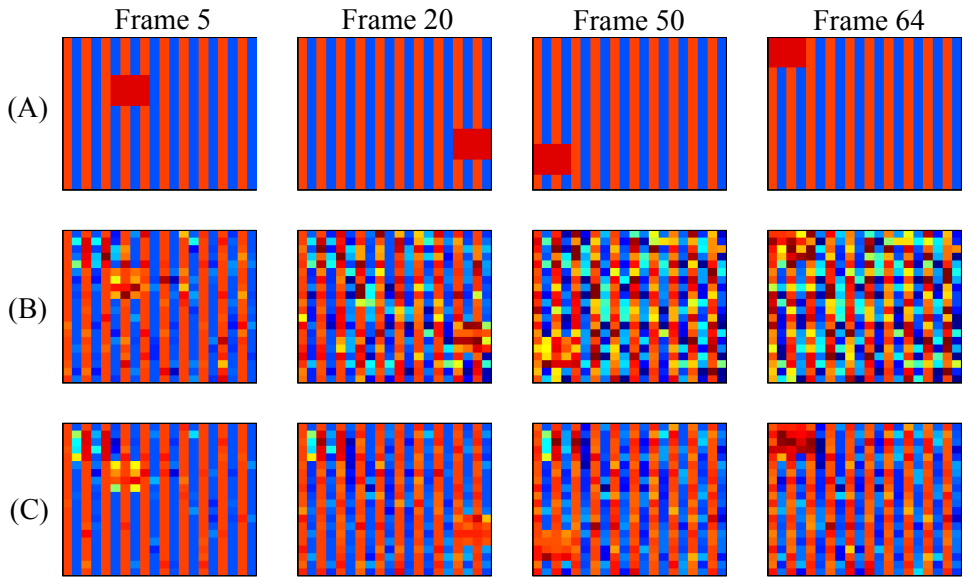}
	\caption{(A) Four noiseless frames of a movie. Recovery via  (B) static CS and (C) dynamic CS in the presence of disturbance.}
	\label{MovieSchemCSvsDCS}
\end{figure}

The next study illustrates recovery in the presence of both disturbance and noise for a smoothly changing sequence of 64 images (frames). Each frame, is corrupted with disturbance and at each time, and the difference between two consecutive frames is considered as the sparse input to the network. The disturbance ${\bf{d}}_k$ is assumed to be a random variable drown from a Gaussian distribution, $\mathcal{N}(0, 0.2)$, passed through a fifth-order Chebyshev high pass filter. Each frame has $m=400$ pixels and $K=64$. Furthermore, we consider random Gaussian matrices for ${\bf{B}}$ and ${\bf{C}}$ with $n=p=200$. 

We proceeded to design the matrix ${\bf{A}}$ to balance the performance bound \eqref{ErrorBound} and the ability to reject the disturbance as per Section \ref{secDisNoise}. Fig. \ref{MovieSchemCSvsDCS}A shows the original frames at different times. We assumed the first frame is known exactly. Frames recovered from the output of a static network , i.e., $\bf{A}=0$ are depicted in Fig. \ref{MovieSchemCSvsDCS}B.  In contrast, frames recovered from the output of the designed dynamic network are shown in Fig. \ref{MovieSchemCSvsDCS}C. It is clear from the figure that quality of recovery is better in the latter case. Fig. \ref{PSNRCSvsDCS} illustrates the PSNR, defined as $10\log(\frac{1}{M\!S\!E})$ as a function of frame number with and without dynamics. It can be concluded from this figure that having a designed matrix ${\bf{A}}$ results in recovery that is more robust to disturbance, while without dynamics, error propagates over time, and the reconstruction quality is degraded.

\begin{figure}[!t]
	\centering
	\vspace{-.1in}
	\includegraphics[width=0.44\textwidth]{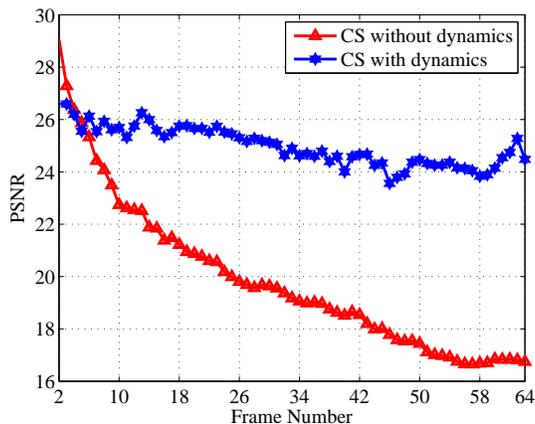}
	\caption{PSNR of the recovered frame versus frame number for both CS with and without dynamics.}
	\label{PSNRCSvsDCS}
\end{figure}

\subsection{Example 3: Input Recovery in an Overactuated Rate-Based Neuronal Network}

A fundamental question in theoretical neuroscience centers on how the architecture of brain networks enables the encoding/decoding of sensory information \cite{barth2012experimental,olshausen1996emergence}.  In our final example, we use the results of Theorems \ref{TheoremNoisyFullRank} and \ref{TheoremNoiselessObservability} to highlight how certain structural and dynamical features of neuronal networks may provide the substrate for sparse input decoding.  

Specifically, we consider a firing rate-based neuronal network \cite{dayan2005theoretical} of the form
\begin{equation}
	{\bf{T}_r} \frac{d{\bf{r}}}{dt}= -{\bf{r+ Wu + Mr}},
	\label{firing}
\end{equation}
with input rates ${\bf{u}} \in \mathbb{R}^m$, output rates ${\bf{r}} \in \mathbb{R}^n$, a feed-forward synaptic weight matrix ${\bf{W}}  \in \mathbb{R}^{n \times m}$, and a recurrent synaptic weight matrix \mbox{${\bf{M}} \in \mathbb{R}^{n \times n}$}.  We consider $n = 50$ neurons which receive synaptic inputs from $m=100$ afferent neurons, i.e., neurons that impinge on the network in question.  Here, ${\bf{T}_r} \in \mathbb{S}_{+}^n$ is a diagonal matrix whose diagonal elements are the time constants of the neurons.  A discrete version of (\ref{firing}), alongside a linear measurement equation can be written in the standard form \eqref{linearDynamicFormulation} where ${\bf{A}} = {\bf{I}}_n - \Delta t {\bf{T}_r}^{-1} + \Delta t {\bf{T}_r}^{-1} {\bf{M}}$ is related to connections between nodes in the network, and ${\bf{B}} = \Delta t {\bf{T}_r}^{-1} {\bf{W}}$ contains weights between input and output nodes.  For this example, we assume that the network connectivity has a Watts--Strogatz small-world topology  \cite{watts1998collective} with connection probability $p_M$ and rewiring probability $q_M$.

The recurrent synaptic matrix ${\bf{M}}$ is defined as 
\begin{equation}
	\begin{aligned}
		\left({\bf{M}}\right)_{ij} &= \left\{ 
		\begin{array}{l l}
			+ m^E_{ij} & ~ \text{if recurrent neuron $j$ is excitatory}\\
			0&  ~ \text{if no connection from neuron $j$ to $i$}\\
			- m^I_{ij}&  ~ \text{if recurrent neuron $j$ is inhibitory}
			
		\end{array} \right.\\
		% m^w_{ij} &\sim \ln\mathcal{N}(0,1) 
	\end{aligned} 
\end{equation}
%Having ${\bf{M}}_s$ and ${\bf{M}}_w$, the full recurrent synaptic weight matrix ${\bf{M}}$ can be written as 
%\begin{equation}
%	{\bf{M}} = {\bf{M}}_s \circ {\bf{M}}_w,
%\end{equation}
%where $\circ$ is the Hadamard product operator.

\begin{figure}[!t]
	\centering
	\includegraphics[width=0.65\textwidth]{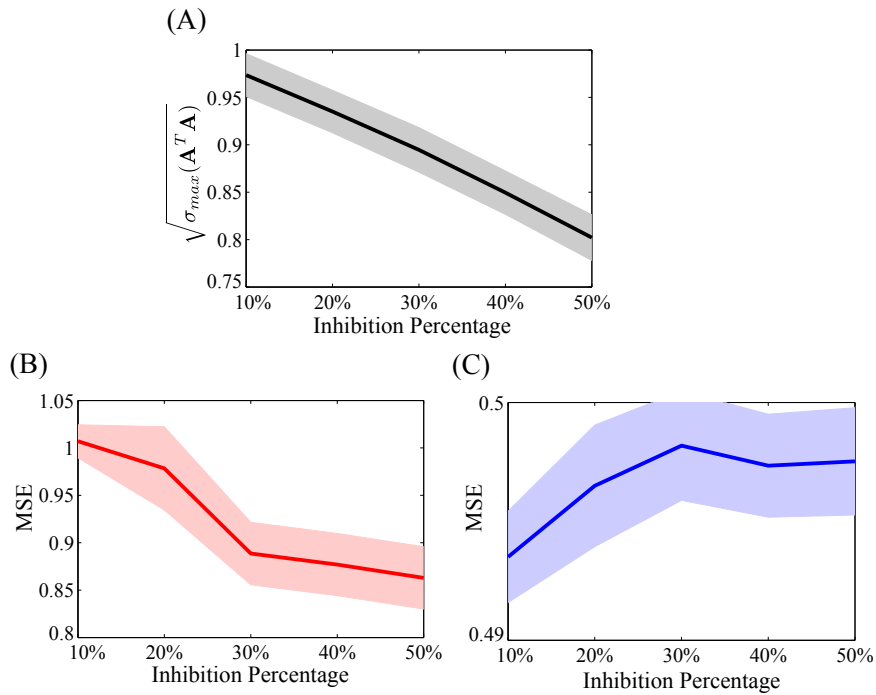}
	\caption{(A) The maximum singular value of ${\bf{A}}$ versus the inhibition percentage. $M\!S\!E$ of input recovery in the presence of (B) Noise and (C) Disturbance as a function of the percent of inhibitory neurons.}
	\label{WatsStrogatz}
\end{figure}

%For the purposes of illustration, we assume a millisecond timescale.  

For the purposes of illustration, we select the diagonal elements of matrix ${\bf{T}}_r$, from a uniform distribution $\mathcal{U}(0.1,0.2)$.  We study the recovery performance associated with the network over 100 time steps, assuming a timescale of milliseconds and an discretization step of  $0.1~ms$.  At each time step, the nonzero elements of the input vector ${\bf{u}}$, i.e., firing rate of the afferent neurons, are drawn from an uniform distribution $\mathcal{U}(0.5, 1.5)$.  Moreover, we assume that elements of the observation matrix ${\bf{C}}$ are drawn from a Gaussian distribution $\mathcal{N}(0,1)$.  Finally, we assume $m^E_{ij}$ and $m^I_{ij}$ are drawn from lognormal distributions $\ln \mathcal{N}(0,1)$ and $\ln \mathcal{N}(0,0.1)$, respectively. The latter assumption is chosen for illustration only and is not related to known physiology.

% for the discretization step to transform the continuous model in \eqref{firing} to its discrete version in \eqref{linearDynamicFormulation}.

%We choose the time constant, in millisecond timescale, of the recurrent neurons, diagonal elements of matrix ${\bf{T}}_r$, from a uniform distribution $\mathcal{U}(1,2)$. We study each network over an interval of $10~ms$ with $0.1~ms$ for the discretization step to transform the continuous model in \eqref{firing} to its discrete version in \eqref{linearDynamicFormulation}. 
%Nonzero elements of the input vector ${\bf{u}}$, firing rate of the afferent neurons, at each time step are drawn from an uniform distribution $\mathcal{U}(0.5, 1.5)$. Moreover, we assume that elements of the observation matrix ${\bf{C}}$ are drawn from a Gaussian distribution $\mathcal{N}(0,1)$. 

\subsubsection{Recovery Performance from Error Bounds}

We proceed to conduct a Monte Carlo simulation of 100 different realizations of ${\bf{W}}$, ${\bf{M}}$ and ${\bf{C}}$.  
Fig. \ref{WatsStrogatz}A illustrates that the maximum singular value of the matrix ${\bf{A}}$ decreases as a function of the percent of inhibitory neurons.  Thus, we anticipate from our derived performance bounds that performance in terms of mean square error (MSE) should be best for networks with high inhibition in the presence of noise. This prediction bears out in Fig. \ref{WatsStrogatz}B, where we indeed observe a monotone relationship between MSE and inhibition. On the one hand, low-inhibition is favorable for facilitating recovery in the presence of disturbance depicted in Fig. \ref{WatsStrogatz}C. Such tradeoffs are interesting to contemplate when considering the functional advantages of network architectures observed in biology, such as the pervasive 80-20 ratio of excitatory to inhibitory neurons \cite{dayan2005theoretical, king2013inhibitory}. Together, Figs.  \ref{WatsStrogatz}B and \ref{WatsStrogatz}C illustrate how the excitatory-inhibitory ratio mediate a basic tradeoff in the capabilities of a rate-based neuronal network.

{

\subsubsection{Recoverable Sparsity based on Theorem \ref{TheoremNoiselessObservability}}
Having ascertained the performance tradeoff curves, we sought to characterize in more detail the level of recoverable sparsity with specific connection to Theorem \ref{TheoremNoiselessObservability} and \eqref{RankCondition}.  We considered networks as above, but with $p=30$ and 20/80 for the ratio of inhibitory/excitatory over 10 time steps for 100 random trials.  Thus, the output of the network is of lower dimension than the network state space and the observability matrix is of nontrivial construction.
% We conducted simulation experiments to examine how the small world type networks satisfy rank condition \eqref{RankCondition} for different value of $s$.
Fig. \ref{RankTest}A shows that for this setup, the rank condition \eqref{RankCondition} holds up to $2s=27$.  Thus, Theorem \ref{TheoremNoiselessObservability} predicts that recover will be possible (to within the RIP condition on ${\bf{CB}}$) for signals with 13 nonzero elements.  Fig. \ref{RankTest}B validates this theoretical prediction by illustrating recovery performance in the absence of disturbance and noise for different values  $s$. It is observed that when the rank condition holds, reconstruction is perfect and that the probability of exact recovery is decreased by increasing $s$, as expected. 

%Also, it provide insights on about the probability of satisfying the requirement conditions for perfect time-varying sparse input recovery. In this experiment, the nonzero elements of the input vector at each time instant are drawn from an uniform distribution $\mathcal{U}(0.5, 1.5)$.  Moreover, we assume that elements of the observation matrix ${\bf{C}}$ are drawn from a Gaussian distribution $\mathcal{N}(0,1)$.  Finally, we assume $m^E_{ij}$ and $m^I_{ij}$ are drawn from lognormal distributions $\ln \mathcal{N}(0,1)$ and $\ln \mathcal{N}(0,0.1)$, respectively. 
}

%in of a large number of nonzero elements.  However, on the other hand, inhibition is necessary for enabling robust recovery in the face of disturbance and noise. 

%For each realization we numerically compute the probability of exact input recovery for a fixed level of sparsity, i.e., $s$ in the absence of noise and disturbance.  The results are depicted in Fig. \ref{WatsStrogatz}A, where it is seen that a purely inhibitory network facilitates recovery of a larger number of nonzero elements.  We perform a similar analysis with respect to the connection probability and assuming that the network is $20\%$ inhibitory. The results for connection probability from $0.2$ to $0.8$ are shown in Fig. \ref{WatsStrogatz}B, where it is observed that denser networks facilitate recovery of a larger number of nonzero elements.

%Finally, we highlight the results of Theorem \ref{TheoremNoisyFullRank} by examining the performance associated with these same networks in the presence of disturbance and noise. 

\begin{figure}[!t]
	\centering
	\includegraphics[width=0.65\textwidth]{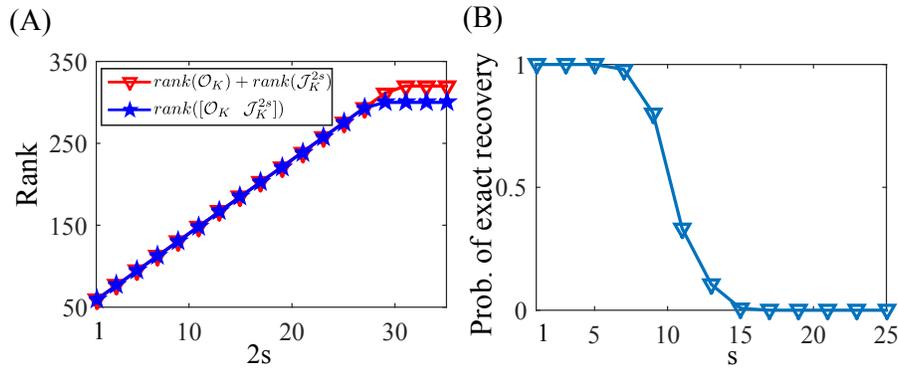}
	\caption{{(A) Examining the rank condition \eqref{RankCondition} in the networks of Example 3, with $p=30$, for different values of $s$. (B) The probability of exact recovery of dynamic sparse input to the network over $s$.}}
	\label{RankTest}
\end{figure}

{

\section{Conclusion}
%In this paper, we provided theoretical guarantees for exact and stable recovery of the sparse input in an over-actuated linear dynamical system. The results are based on generalization of several methods grounded in the theory of sparse recovery for static systems, i.e., compressed sensing. We derived a bound on the performance of sparse input recovery and verified it in numerical experiments. Then we also showed that the proposed convex dynamical Problem ($P2$) can be used even in the case of compressible  input.

\subsection{Summary}
In this paper, we present several results pertaining to the effect of temporal dynamics on compressed sensing of time-varying signals.  Specifically, we considered the recovery of sparse inputs to a linear dynamical system (network) from limited observation of the network states.  We provide basic conditions on the system that ensure solution existence and, further, derive several bounds in terms of the system dynamics for recovery performance in the presence of both input disturbance and observation noise.  We show that dynamics can play opposing roles in mediating accurate recovery with respect to these two different sources of corruption.  Thus, our results indicate tradeoffs that may inform the design of dynamical systems for time-varying compressed sensing.  These tradeoffs are illustrated through a series of examples, including one that highlights how the developed theory could be used to interrogate the functional role of inhibition in a neuronal network. 

\subsection{Implications and Future Work}
The results can have both engineering and scientific impacts.  In the former case, the goal may be to \textit{design} networks to process time-varying signals that are naturally sparse, such as high-dimensional neural data, or to be resilient to time-varying sparse perturbations.  In the latter case, the goal is to understand how the naturally occurring architectures of networks, such as those in the brain, confer advantages for processing of afferent signals.  In both cases, a precursor to further study are a set of verifiable conditions that overtly link network characteristics/dynamics to sparse input processing.  Our paper provides such conditions for networks with linear dynamics and develops illustrative examples that highlight these potential applications.  Treatment of systems with nonlinear dynamics, as well as a more detailed examination of random networks using the theory, are left as subjects for future work.

}
% A natural extension of this work is, of course, the inclusion of nonlinearity in the system dynamics, a topic that even in the static compressed sensing community, is the subject of 

% Clearly, it is possible to consider more complex dynamics than the linear formulation we study herein.  

%The solution methodologies developed in this paper assumed that the network parameters such as the feed-forward weight matrix $\bf{B}$, recurrent weight matrix $\bf{A}$ and the observation matrix $\bf{C}$ are known and available.
% Even so, the methods outlined in this paper could be an interesting way to probe basic systems theoretic properties of complex networks (such as neuronal networks) as functions of topology and parameter distributions. More generally, formulating a problem to estimate network parameters as well as state and input results in a non-convex problem which is significantly harder to solve, though is an area of future research interest.

%\hfill mds
 
%\hfill September 17, 2014
%\newpage
\appendices

\section{Proof of Lemma \ref{UniquenessFullRankC}} \label{AppA}
Based on the assumption on the null space of the linear map $C: \mathbb{R}{^{n}} \to \mathbb{R}{^{n}}$, given $\left({\bf{y}}_k\right)_{k=0}^K$, there is a unique sequence of $\left({\bf{r}}_k\right)_{k=0}^K$. We now prove the uniqueness of $\left({\bf{u}}_k\right)_{k=0}^{K-1}$. First, consider the following equations:
\begin{equation}
\begin{aligned}
{\bf{y}}_0 &= {\bf{C}} {\bf{r}}_0\\
{\bf{y}}_1 &= {\bf{C}} {\bf{A}} {\bf{r}}_0 + {\bf{C}} {\bf{B}} {\bf{u}}_0\\
&\vdots\\
{\bf{y}}_K &= {\bf{C}} {\bf{A}} {\bf{r}}_{K-1} + {\bf{C}} {\bf{B}} {\bf{u}}_{K-1}.\\
\end{aligned}
\label{uniqueness}
\end{equation}
The remainder of the proof is by contradiction. Let us assume that the sequence $\left({\bf{u}}_k\right)_{k=0}^{K-1}$ is not unique and there is another sequence of $s$-sparse $\left({\bf{\hat{u}}}_k\right)_{k=0}^{K-1}$ which satisfies (\ref{uniqueness}), leading to
\begin{equation}
\begin{aligned}
{\bf{y}}_0 &= {\bf{C}} {\bf{r}}_0\\
{\bf{y}}_1 &= {\bf{C}} {\bf{A}} {\bf{r}}_0 + {\bf{C}} {\bf{B}} {\bf{\hat{u}}}_0\\
&\vdots\\
{\bf{y}}_K &= {\bf{C}} {\bf{A}} {\bf{r}}_{K-1} + {\bf{C}} {\bf{B}} {\bf{\hat{u}}}_{K-1}.\\
\end{aligned}
\label{contradiction}
\end{equation}
Therefore, based on (\ref{uniqueness}) and (\ref{contradiction}) we can conclude that 
\begin{equation}
{\bf{C}} {\bf{B}} ({\bf{u}}_0 - {\bf{\hat{u}}}_0 ) = \cdots = {\bf{C}} {\bf{B}} ({\bf{u}}_{K-1} - {\bf{\hat{u}}}_{K-1} ) = {\bf{0}}.
\label{uniqness}
\end{equation}
Matrix ${\bf{C}}$ is non-singular, hence equation  (\ref{uniqness}) can be simplified as follows:
\begin{equation}
{\bf{B}} ({\bf{u}}_0 - {\bf{\hat{u}}}_0 ) = \cdots = {\bf{B}} ({\bf{u}}_{K-1} - {\bf{\hat{u}}}_{K-1} ) = {\bf{0}}.
\label{uniqness}
\end{equation}

Based on the assumption that the matrix ${\bf{B}}$ satisfies the RIP condition (\ref{RIP}) with isometry constant $\delta_{2s}<1$ and the fact that the support of the vectors $\left({\bf{u}}_0 - {\bf{\hat{u}}}_0\right),\cdots,~\left({\bf{u}}_{K-1} - {\bf{\hat{u}}}_{K-1}\right)$ are at most $2s$, the lower bound of the RIP condition for ${\bf{B}}$ results in
\begin{equation}
\begin{aligned}
(1-\delta_{2s})\|{\bf{u}}_0 - {\bf{\hat{u}}}_0\|_{\ell_2}^{2} &\leq \|{\bf{B}} ({\bf{u}}_0 - {\bf{\hat{u}}}_0)\|_{\ell_2}^{2} = 0\\
&\vdots\\
(1-\delta_{2s})\|{\bf{u}}_{K-1} - {\bf{\hat{u}}}_{K-1}\|_{\ell_2}^{2} &\leq \|{\bf{B}} ({\bf{u}}_{K-1} - {\bf{\hat{u}}}_{K-1})\|_{\ell_2}^{2} = 0,
\end{aligned}
\end{equation}
which means that ${\bf{u}}_0 = {\bf{\hat{u}}}_0,~\cdots,~{\bf{u}}_{K-1} = {\bf{\hat{u}}}_{K-1}$ and the sequence of $s$-sparse vectors $\left({\bf{u}}_k\right)_{k=0}^{K-1}$ is unique. 

\section{Proof of  {{Lemma}} \ref{lemma1}} \label{AppB}
If ${\bf{{r}}}^*_k$ is the solution to Problem ($P2$), then \mbox{${\bf{{y}}}^*_k= {\bf{{C}}} {\bf{{r}}}^*_k$} satisfies the inequality in ($P2$) which means that \mbox{$\| {\bf{y}}_k- {\bf{y}}^*_k \|_{\ell_2} \leq \epsilon$} which can be reformulated as
\begin{equation}
{\bf{y}}^*_k  =  {\bf{y}}_k + {\bf{e}}^*_k, ~ \|{\bf{e}}^*_k \|_{\ell_2} \leq \epsilon.
\label{key}
\end{equation}
By replacing ${\bf{y}}_k$ from (\ref{linearDynamicFormulationwithnoise}) in (\ref{key}) we have 
\begin{equation}
\begin{aligned}
{\bf{{C}}} {\bf{{r}}}^*_k  =  {\bf{{C}}} {\bf{{\bar{r}}}}_k + {\bf{e}}_k + {\bf{e}}^*_k\\
{\bf{{C}}} \left( {\bf{{r}}}^*_k  -  {\bf{{\bar{r}}}}_k \right) = {\bf{e}}_k + {\bf{e}}^*_k,
\end{aligned}
\end{equation}
which results in
\begin{equation}
\begin{aligned}
\| {\bf{{r}}}^*_k  -  {\bf{{\bar{r}}}}_k \|_{\ell_2} &= \| {\bf{{C}}}^{-1} \left(  {\bf{e}}_k + {\bf{e}}^*_k \right) \|_{\ell_2}.
\end{aligned}
\label{bound}
\end{equation}

Finally, we can derive the error bound for the state error at each time by substituting (\ref{induced}) into (\ref{bound}) as
\begin{equation}
\begin{aligned}
\| {\bf{{r}}}^*_k  -  {\bf{{\bar{r}}}}_k \|_{\ell_2} &\leq \sqrt{ \sigma_{max}\left({\bf{{C}}}^{-T}{\bf{{C}}}^{-1}\right)} ~\|  {\bf{e}}_k + {\bf{e}}^*_k  \|_{\ell_2}\\
&\leq \sqrt{ \sigma_{max}\left({\bf{{C}}}^{-T}{\bf{{C}}}^{-1}\right)} ~\left(\|  {\bf{e}}_k \|_{\ell_2}+ \|{\bf{e}}^*_k \|_{\ell_2} \right)\\
& = \frac{2 \epsilon}{ \sqrt{ \sigma_{min}\left({\bf{{C}}}^{T}{\bf{{C}}}\right)}}
\end{aligned}
\label{finalbound}
\end{equation}

\section{Proof of  {{Lemma}} \ref{cone}} \label{AppCone}
For each $j\geq 2$ and $k=0,\cdots,K-1$ we have
\begin{equation}
\|{\bf{{h}}}_{k,T_j(k)}\|_{\ell_2} \leq s^{1/2} \|{\bf{{h}}}_{k,T_j(k)}\|_{l_{\infty}} \leq s^{-1/2} \|{\bf{{h}}}_{k,T_{j-1}(k)}\|_{\ell_1},
\end{equation}
and thus
\begin{equation}
\begin{aligned}
\sum_{j\geq 2} \|{\bf{{h}}}_{k,T_j(k)}\|_{\ell_2} & \leq s^{-1/2} (\|{\bf{{h}}}_{k,T_1(k)}\|_{\ell_1} + \|{\bf{{h}}}_{k,T_2(k)}\|_{\ell_1} + \cdots) \\
 & \leq s^{-1/2} \|{\bf{{h}}}_{k,T_0^c(k)}\|_{\ell_1}.
\end{aligned}
\label{othersets}
\end{equation}
Therefore, we have the following equation
\begin{equation}
\begin{aligned}
\|{\bf{{h}}}_{k,T^c_{0 1}(k)}\|_{\ell_2}&=\|\sum_{j\geq 2} {\bf{{h}}}_{k,T_j(k)}\|_{\ell_2} \leq \sum_{j\geq 2} \|{\bf{{h}}}_{k,T_j(k)}\|_{\ell_2}\\
&  \leq s^{-1/2} \|{\bf{{h}}}_{k,T_0^c(k)}\|_{\ell_1}.
\label{initial}
\end{aligned}
\end{equation}
Since $\left({\bf{{u^*}}}_k\right)_{k=0}^{K-1}$ minimizes the cost function in Problem ($P2$),
\begin{equation}
{\small{
\begin{aligned}
 \sum_{k=0}^{K-1}  \|{\bf{{\bar{u}}}}_k\|_{\ell_1}  & \geq  \sum_{k=0}^{K-1}  \|{\bf{{u}}}^*_k\|_{\ell_1}  = \sum_{k=0}^{K-1} \|{\bf{{\bar{u}}}}_k + {\bf{{h}}}_{k}\|_{\ell_1} \\
& = \sum_{k=0}^{K-1} \left(   \sum_{i \in T_0(k)} | {\bf{{\bar{u}}}}_{k,i} + {\bf{{h}}}_{k,i}| +  \sum_{i \in T_0^c(k)} | {\bf{{\bar{u}}}}_{k,i} + {\bf{{h}}}_{k,i}|  \right) \\
& \geq   \sum_{k=0}^{K-1}  (  \|{\bf{{{\bar{u}}}}}_{k,T_0(k)}\|_{\ell_1} - \|{\bf{{{h}}}}_{k,T_0(k)}\|_{\ell_1} 
+ \|{\bf{{{h}}}}_{k,T^c_0(k)}\|_{\ell_1} \\
& \qquad ~~~ +\|{\bf{{{\bar{u}}}}}_{k,T^c_0(k)}\|_{\ell_1} )
\end{aligned}
}}
\end{equation}
${\bf{{{\bar{u}}}}}_{0},\cdots,{\bf{{{\bar{u}}}}}_{K-1}$ are non-zero for $T_0(0),\cdots,T_0(K-1)$, respectively. Therefore, 
\begin{equation}
\|{\bf{{{\bar{u}}}}}_{0,T^c_0(0)}\|_{\ell_1} =\cdots= \|{\bf{{{\bar{u}}}}}_{K-1,T^c_0(K-1)}\|_{\ell_1} = 0
\end{equation}
which gives
\begin{equation}
\begin{aligned}
 \sum_{k=0}^{K-1}   \|{\bf{{{h}}}}_{k,T^c_0(k)}\|_{\ell_1}   \leq   \sum_{k=0}^{K-1}  \|{\bf{{{h}}}}_{k,T_0(k)}\|_{\ell_1}.
 \label{firststep}
 \end{aligned}
 \end{equation}
Considering  
\begin{equation}
\|{\bf{{{h}}}}_{k,T_0(k)}\|_{\ell_1} \leq s^{1/2} \|{\bf{{{h}}}}_{k,T_0(k)}\|_{\ell_2},
\end{equation}
and substituting it into (\ref{initial}) and (\ref{firststep}) we have 
\begin{equation}
\begin{aligned}
 \sum_{k=0}^{K-1} \|{\bf{{h}}}_{k,T_{01}^c(k)}\|_{\ell_2}   \leq    \sum_{k=0}^{K-1}  \|{\bf{{{h}}}}_{k,T_0(k)}\|_{\ell_2}.
\end{aligned}
\end{equation}

\section{Proof of  {{Lemma}} \ref{lemma3}} \label{AppD}
To find the bound for $ \sum_{k=0}^{K-1} \|{\bf{{h}}}_{k,T_{01}(k)}\|_{\ell_2}$, we start with 
\begin{equation}
{\bf{B}}{\bf{h}}_k = {\bf{B}}{\bf{h}}_{k,T_{01}(k)} + \sum_{j \geq 2} {\bf{B}}{\bf{h}}_{k,T_j(k)},
\end{equation}
which gives
\begin{equation}
\begin{aligned}
\| {\bf{B}}{\bf{h}}_{k,T_{01}(k)}  \|_{\ell_2}^2 &= \left\langle {\bf{B}}{\bf{h}}_{k,T_{01}(k)}, {\bf{B}}{\bf{h}}_k \right\rangle \\
&\quad- \langle{\bf{B}}{\bf{h}}_{k,T_{01}(k)}, \sum_{j \geq 2} {\bf{B}}{\bf{h}}_{k,T_j(k)} \rangle.
\end{aligned}
\end{equation}
From (\ref{main}) and the RIP condition for ${\bf{B}}$,
\begin{equation}
\begin{aligned}
|\langle{\bf{B}}{\bf{h}}_{k,T_{01}(k)}, {\bf{B}}{\bf{h}}_k \rangle|  &\leq  \| {\bf{B}}{\bf{h}}_{k,T_{01}(k)} \|_{\ell_2}   \| {\bf{B}}{\bf{h}}_k \|_{\ell_2} \\
& \leq 2 \epsilon C_0 \sqrt{1+ \delta_{2s}} \|{\bf{h}}_{k,T_{01}(k)}\|_{\ell_2},
\end{aligned}
\label{proof1}
\end{equation}
and, moreover, application of the parallelogram identity for disjoint subsets $T_0(k)$ and $T_j(k), j\geq 2$ results in
\begin{equation}
|\langle{\bf{B}}{\bf{h}}_{k,T_0(k)}, {\bf{B}}{\bf{h}}_{k,T_j(k)}\rangle|  \leq  \delta_{2s}  \| {\bf{h}}_{k,T_0(k)} \|_{\ell_2}  \| {\bf{h}}_{k,T_j(k)} \|_{\ell_2}.
\label{D4}
\end{equation}
Inequality (\ref{D4}) holds for $T_1$ in place of $T_0$. Since $T_0$ and $T_1$ are disjoint 
\begin{equation}
\| {\bf{h}}_{k,T_0(k)} \|_{\ell_2} + \| {\bf{h}}_{k,T_1(k)} \|_{\ell_2} \leq \sqrt{2} \| {\bf{h}}_{k,T_{01}(k)} \|_{\ell_2},
\end{equation}
which results in
\begin{equation}
\begin{aligned}
&(1-\delta_{2s}) \| {\bf{h}}_{k,T_{01}(k)} \|^2_{\ell_2} \leq \| {\bf{B}}{\bf{h}}_{k,T_{01}(k)} \|^2_{\ell_2}\\ 
&~~\leq  \| {\bf{h}}_{k,T_{01}(k)} \|_{\ell_2} ( 2  C_0 \epsilon \sqrt{1+ \delta_{2s}} + \sqrt{2} \delta_{2s}   \sum_{j \geq 2}  \| {\bf{h}}_{k,T_j(k)} \|_{\ell_2} ).
\end{aligned}
\label{D6}
\end{equation}
It follows from (\ref{othersets}) and (\ref{D6}) that
\begin{equation}
 \| {\bf{h}}_{k,T_{01}(k)} \|_{\ell_2} \leq \alpha  C_0 \epsilon  + \rho s^{-1/2} \| {\bf{h}}_{k,T_0^c(k)} \|_{\ell_2}.
 \label{D7}
\end{equation}
Now, using (\ref{firststep}) and (\ref{D7})  we can conclude that
\begin{equation}
\begin{aligned}
 \sum_{k=0}^{K-1} \|{\bf{{h}}}_{k,T_{01}(k)}\|_{\ell_2}  &\leq K \alpha  C_0 \epsilon + \rho s^{-1/2}  \sum_{k=0}^{K-1} {\bf{h}}_{k,T_0^c(k)} \|_{\ell_2} \\
&  \leq K \alpha  C_0 \epsilon + \rho  \sum_{k=0}^{K-1}  \|{\bf{{{h}}}}_{k,T_0(k)}\|_{\ell_2} \\
& \leq K \alpha  C_0 \epsilon + \rho  \sum_{k=0}^{K-1}  \|{\bf{{{h}}}}_{k,T_{01}(k)}\|_{\ell_2} , 
\end{aligned}
\label{proof2}
\end{equation}
which means
\begin{equation}
 \sum_{k=0}^{K-1} \|{\bf{{h}}}_{k,T_{01}(k)}\|_{\ell_2}  \leq K(1-\rho)^{-1}\alpha C_0 \epsilon.
\end{equation}

\section{Proof of Lemma \ref{UniquenessGeneralC}} \label{AppE}
We start the proof using contradiction. Let us assume that the sequence of $\left({\bf{u}}_k\right)_{k=0}^{K-1}$ and $\left({\bf{r}}_k\right)_{k=0}^{K}$ is not unique and there is another sequence of $s$-sparse $\left({\bf{\hat{u}}}_k\right)_{k=0}^{K-1}$ and $\left({\bf{\hat{r}}}_k\right)_{k=0}^{K}$ which satisfies the system  \eqref{linearDynamicFormulation} with noiseless measurements. Note that ${\bf{{u}}}_k-{\bf{\hat{u}}}_k$ has at most $2s$ nonzero elements. Similar to the depiction in Fig. \ref{SchemObservability}, we can rewrite \eqref{SeqObserv} based on $2s$ columns of ${\bf{B}}$ corresponding to $2s$ active non-zero elements of ${\bf{{u}}}_k-{\bf{\hat{u}}}_k$ as

\begin{equation}
\begin{pmatrix}
{\bf{y}}_0\\
{\bf{y}}_1\\
\vdots \\
{\bf{y}}_K
\end{pmatrix}
= \mathcal{O}_K {\bf{r}}_0 + \mathcal{J}_K^{2s}  
\begin{pmatrix}
{\bf{u}}_0^{2s}\\
{\bf{u}}_1^{2s}\\
\vdots \\
{\bf{u}}_{K-1}^{2s}
\end{pmatrix}
,
\end{equation}

\begin{equation}
\begin{pmatrix}
{\bf{y}}_0\\
{\bf{y}}_1\\
\vdots \\
{\bf{y}}_K
\end{pmatrix}
= \mathcal{O}_K {\bf{\hat{r}}}_0 + \mathcal{J}_K^{2s}  
\begin{pmatrix}
{\bf{\hat{u}}}_0^{2s}\\
{\bf{\hat{u}}}_1^{2s}\\
\vdots \\
{\bf{\hat{u}}}_{K-1}^{2s}
\end{pmatrix}
.
\end{equation}
By subtracting the above equations from each other we have
\begin{equation}
\mathcal{O}_K ({\bf{r}}_0 - {\bf{\hat{r}}}_0) + \mathcal{J}_K^{2s}  
\begin{pmatrix}
{\bf{u}}_0^{2s} - {\bf{\hat{u}}}_0^{2s}\\
{\bf{u}}_1^{2s} - {\bf{\hat{u}}}_1^{2s}\\
\vdots \\
{\bf{u}}_{K-1}^{2s} - {\bf{\hat{u}}}_{K-1}^{2s}
\end{pmatrix}
={\bf{0}}.
\label{SubtractionObserv}
\end{equation}
Based on assumptions that $rank(\mathcal{O}_K)=n$ and \mbox{$rank([\mathcal{O}_K~~\mathcal{J}_K^{2s}])=n+rank(\mathcal{J}_K^{2s}),~\forall \mathcal{J}_K^{2s} \in {\bf{J}}_K^{2s}$}, all columns of the observability matrix
must be linearly independent of each other, and of all columns of the $\mathcal{J}_K^{2s}$ matrix. Hence, the vector ${\bf{r}}_0 - {\bf{\hat{r}}}_0 = {\bf{0}}$. Having  ${\bf{r}}_0 = {\bf{\hat{r}}}_0$ and the matrix ${\bf{CB}}$ satisfying the RIP condition (\ref{RIP}) with isometry constant $\delta_{2s}<1$, it is easy to see that ${\bf{u}}_k={\bf{\hat{u}}}_k$ and therefore there exists unique state and $s$-sparse input sequences.

\section{Proof of Theorem \ref{TheoremNoisyObservability}} \label{AppF}
Lets assume that the the sequences $\left({\bf{{r}}}^*_k\right)_{k=0}^K$ and $s$-sparse $\left({\bf{{u}}}^*_k\right)_{k=0}^{K-1}$ are the solutions of Problem ($P2$). In this case \eqref{SubtractionObserv} can be rewritten as 
\begin{equation}
\mathcal{O}_K ({\bf{r}}^*_0 - {\bf{\bar{r}}}_0) + \mathcal{J}_K^{2s}  
\begin{pmatrix}
{\bf{u^*}}_0^{2s} - {\bf{\bar{u}}}_0^{2s}\\
{\bf{u^*}}_1^{2s} - {\bf{\bar{u}}}_1^{2s}\\
\vdots \\
{\bf{u^*}}_{K-1}^{2s} - {\bf{\bar{u}}}_{K-1}^{2s}
\end{pmatrix}
=\begin{pmatrix}
{\bf{e}}_0\\
{\bf{e}}_1\\
\vdots \\
{\bf{e}}_K
\end{pmatrix}
+\begin{pmatrix}
{\bf{e}}^*_0\\
{\bf{e}}^*_1\\
\vdots \\
{\bf{e}}^*_K
\end{pmatrix}
,
\end{equation}
where $\|{\bf{e}}^*_k \|_{\ell_2} \leq \epsilon$. Based on assumptions that \mbox{$rank(\mathcal{O}_K)=n$} and \mbox{$rank([\mathcal{O}_K~~\mathcal{J}_K^{2s}])=n+rank(\mathcal{J}_K^{2s}),~ \forall \mathcal{J}_K^{2s} \in {\bf{J}}_K^{2s}$} we can project the above equation using the projection \mbox{$({\bf{I}} - {\bf{P}}_{\mathcal{J}_K^{2s}})$} where ${\bf{P}}_{\mathcal{J}_K^{2s}}={\mathcal{J}_K^{2s}}({\mathcal{J}_K^{2s}}^T {\mathcal{J}_K^{2s}})^{-1}{\mathcal{J}_K^{2s}}^T$. It is straightforward to verify that $({\bf{I}} - {\bf{P}}_{\mathcal{J}_K^{2s}}){\mathcal{J}_K^{2s}} = {\bf{0}}$ and therefore there exists a $C_{\mathcal{J}}$ such that $\| {\bf{{r}}}^*_0  -  {\bf{{\bar{r}}}}_0 \|_{\ell_2} \leq C_{\mathcal{J}} \epsilon$. After finding the error bound for ${\bf{{r}}}^*_0$, sequentially we can find the error bound for the input vectors at each time. For instance at $k=1$, we have
\begin{equation}
\begin{aligned}
{\bf{y}}^*_1&={\bf{CAr}}^*_0+{\bf{CBu}}^*_0\\
{\bf{y}}_1&={\bf{CA\bar{r}}}_0+{\bf{CB\bar{u}}}_0+{\bf{e}}_0,~\|{\bf{e}}_0 \|_{\ell_2} \leq \epsilon\\
{\bf{y}}^*_1 &= {\bf{y}}_1+ {\bf{e}}^*_0,~ \|{\bf{e}}^*_0 \|_{\ell_2} \leq \epsilon
\end{aligned}
.
\end{equation}
Because the matrix ${\bf{CB}}$ satisfies the RIP condition (\ref{RIP}) with isometry constant $\delta_{2s}<\sqrt{2}-1$, with the same approach used in Appendices \ref{AppB} and \ref{AppD}, it is straightforward to verify that there exists a $C_k$ such that $\| {\bf{{u}}}^*_k  -  {\bf{{\bar{u}}}}_k \|_{\ell_2} \leq C_k \epsilon$, which means that always the recovered sparse input is upper bounded by a constant, $C_s$ multiple of the observation error.

\section*{Acknowledgments}
We would like to thank Professor Humberto Gonzalez (WUSTL) for helpful input and discussions. ShiNung Ching holds a Career Award at the Scientific Interface from the Burroughs-Wellcome Fund.  This work was partially supported by AFOSR 15RT0189, NSF ECCS 1509342 and NSF CMMI 1537015, from the US Air Force Office of Scientific Research and the US National Science Foundation, respectively.

%\ifCLASSOPTIONcaptionsoff
  %\newpage
%\fi

%\begin{IEEEbiography}{Michael Shell}
%Biography text here.
%\end{IEEEbiography}
%
%\begin{IEEEbiographynophoto}{John Doe}
%Biography text here.
%\end{IEEEbiographynophoto}
%
%
%\begin{IEEEbiographynophoto}{Jane Doe}
%Biography text here.
%\end{IEEEbiographynophoto}

\bibliographystyle{IEEEtran}
%\IEEEtriggeratref{55}
\bibliography{Ref}

\end{document}